
\documentclass[12pt]{amsart}
\usepackage{amssymb}
\usepackage[all]{xy}
\usepackage{enumerate}


\usepackage{amsfonts}
\usepackage{amsmath}
\usepackage{amsthm}
\usepackage{amssymb}
\usepackage[all]{xy}
\usepackage[latin1]{inputenc}
\usepackage{graphicx}
\usepackage{color}

\input xy
\xyoption{all}

\newtheorem{theorem}{Theorem}[section]
\newtheorem{corollary}[theorem]{Corollary}
\newtheorem{lemma}[theorem]{Lemma}

\newtheorem{algorithm}[theorem]{Algorithm}
\theoremstyle{definition}
\newtheorem{definition}[theorem]{Definition}
\newtheorem{remark}[theorem]{Remark}

\newtheorem{example}[theorem]{Example}

\newcommand{\F}{\mathbb{F}}
\renewcommand{\Im}{\operatorname{Im}}

\newcommand{\C}{\mathcal{C}}
\newcommand{\Pmat}{\mathbb{P}}
\newcommand{\Amat}{\mathbb{A}}

\newcommand\ord{\operatorname {order}}

\newcommand\df{\operatorname{d}_{\operatorname{free}}}

\title[On $1$-dimensional MDS CGC]{On the construction of 1-dimensional MDS convolutional Goppa codes}
\author[Jos\'{e} I. Iglesias-Curto et al.]{Jos\'{e} I. Iglesias-Curto,
    Francisco J. Plaza-Mart\'{\i}n, and
    Gloria Serrano-Sotelo}
\thanks{University of Salamanca, Deparment of Mathematics and IUFFyM, Plaza de la Merced 1, 37008 Salamanca, SPAIN}
\thanks{This research has been carried out with the financial support of the
Spanish Ministry of Science and Innovation for the project
MTM2012-32342.}

\begin{document}

 \maketitle

\begin{abstract}
We show that the free distance, as a function on a space
parameterizing a family of convolutional codes, is a lower-semicontinuous function and that, therefore, the property of being Maximum  Distance Separable (MDS) is an open condition. For a class of convolutional codes, an algorithm is offered to compute the free distance. The behaviour of the free distance by enlargements of the alphabet and by increasing the length is also studied. As an application, the algebraic equations characterizing the subfamily of MDS codes is explicitly computed for families of $1$-dimensional convolutional Goppa codes (CGC).
\end{abstract}


\section{Introduction}\label{sec:intro}

When constructing convolutional codes, two basic requirements are demanded: usability and high error-correction capability. With regard to the first feature, note that the smaller the alphabet (i.e. the base field), the easier the implementation. With respect to the second
one,  the so-called Maximum  Distance Separable (MDS) codes are the most significant (\cite[Theorem~3.3]{JZ:99}), since they have the largest possible distance between codewords (\cite{RS:99}). It is possible that both properties might not be optimized simultaneously since, for instance, the existence of certain MDS codes has only been proved over large enough fields (\cite{J:75},\cite[Theorem~2.10]{RS:99}). Indeed, it is hard to find references with explicit general methods for constructing MDS convolutional codes (\cite{GL:06, DMS:11}).

In this paper, we report a detailed study of the free distance. More precisely, we provide an algorithm for computing the free distance, a proof of its lower semicontinuity  and its preservation by enlargements of the alphabet. As applications, we offer examples of families of MDS convolutional Goppa codes (CGC, see \cite{DMS:04,MDIS:06}), where the size of the base field has been kept as small as possible, as well as a method to produce MDS CGC of greater length.

Our techniques are based on algebraic tools, as in the pioneering
work of Forney (\cite{For:70}). Similar approaches have been  used fruitfully
in the study of convolutional codes \cite{Lom:01,MIC:10,RR:94} and have
provided  good insight into their structure, such as a
generalization of the Singleton bound (\cite[Theorem~2.2]{RS:99}).

The paper is organized and its main results are presented as follows. After some preliminaries (\S\ref{sec:ConvCodesClass}), a case study is given in \S\ref{subsec:casestu}. This \emph{toy model} clearly exhibits  the type of problems we are concerned with.

The computation of the free distance, $\df$, is a hard
task for arbitrary codes. In order to estimate or compute $\df$,  in some cases it is possible to use the well-known fact that the sequence of row distances converges to $\df$ (\cite[Theorem~3.5]{JZ:99}). Nevertheless, the explicit computation normally requires \emph{ad-hoc} methods for each particular situation. The relation between the sequences of row distances, column distances and active distances and $\df$ is a highly interesting problem when designing convolutional codes (recall for instance the notions of Maximum Distance Profile and strongly MDS, see \cite{HRS:05} and \cite{GRS:03}). In this paper,   for the first time  we are able
to bound the stage at which the sequence of row distances has reached the free distance (Theorem~\ref{th:l(G)}).

The second issue in this section consists of showing that the free distance is preserved when the alphabet is enlarged (Theorem~\ref{thm:FreeDistBaseChange}).

\S\ref{sec:freedist} continues with the study of convolutional codes depending on parameters; i.e., families of codes defined over a parameter space. In this situation, we prove that the free distance, which can be understood as a function from the parameter space to ${\mathbb Z}$, is lower semicontinuous (Theorem~\ref{thm:dfreelowersemicont}). Accordingly, the subset of the parameter space corresponding to MDS codes is open (Corollary~\ref{cor:MDSopen}) or, tantamount to this, the closed subset of non-MDS codes is defined by finitely many algebraic relations in the parameters. This result improves that of~\cite[\S5]{RS:99}, which claims that the subset of MDS convolutional codes contains an open subset.

Applications of these results are to be found in \S\ref{sec:familiesMDSCGC}. First, for the case of $1$-dimensional CGC (\cite{DMS:11}) we offer systematic constructions of families as well as the explicit equations characterizing the locus of non-MDS codes as a subset of the parameter space. By demonstrating examples of MDS codes over small fields, we improve the result of \cite{RS:99} concerning the existence of MDS codes (Corollary~\ref{cor:boundN}), which requires that the field must have sufficiently many elements. Second, we offer a procedure that enables us to increase the length of a CGC, preserving the condition of being MDS (\S\ref{subsec:increasinglength}).

The paper ends with some conclusions and possible directions for future work (\S\ref{sec:conclusion}).



\section{Preliminaries}\label{sec:ConvCodesClass}
\subsection{Convolutional Codes}\label{subsec:convcodes}

We set an arbitrary $q$-ary alphabet where $q$ is a power of a prime number $p$. That is, we work on a finite field, $\F$, with $q$ elements and characteristic $p$. Let us now recall some basic facts on
convolutional codes following the classical references \cite{For:70,Pir:88,McE:98,JZ:99}.

\emph{A convolutional code} is defined to be a $\F(z)$-subspace of $\F(z)^n$. The \emph{length} of the code is the number $n$ and the \emph{dimension} of the code is its dimension as a $\F(z)$-vector space.

However, let us briefly comment the approach in terms of $\F[z]$-modules, which will be more suitable for our study. Note that each $\F[z]$-submodule of $\F[z]^n$, $C$, canonically yields a convolutional code, $C\otimes_{\F[z]}\F(z)$. Since every convolutional code arises in this way, there is a bijective correspondence between convolutional codes and the equivalence classes of $\F[z]$-submodules, where $C$ and $C'$ are defined to be equivalent if $C\otimes_{\F[z]}\F(z) = C'\otimes_{\F[z]}\F(z)$ as subspaces of $\F(z)^n$.

Observe that for $G(z)$ a $k\times n$-matrix with entries in $\F[z]$ we may consider $\phi$ to be the $\F[z]^n$-linear map defined by it:
\begin{equation}\label{eq:ConvCodeSubmod}
     \phi:\F[z]^k\hookrightarrow \F[z]^n
\end{equation}
as well as its image, $C:=\Im\phi$, which is the $\F[z]$-submodule generated by the rows of $G(z)$. In this setup, $G(z)$ is called the \emph{generator matrix} of the convolutional code defined by $C$. It is known that each convolutional code has a generator matrix whose entries are polynomials ({\cite{For:70}).

Recall from commutative algebra that the following three conditions are equivalent: i) the maximal minors of $G(z)$ are coprime (as polynomials in $z$); ii) $\operatorname{Coker}\phi$ is locally free; and, iii) $C$ is a direct summand of $\F[z]^n$. If condition i) is satisfied, we say that $G(z)$ is a \emph{basic generator matrix} for (the convolutional code associated to) $C$. If it is not possible to
reduce the row degrees of the generator matrix by elemental row operations, the matrix is called \emph{reduced}. If it is both basic and reduced, we say that it is \emph{canonical}.

Furthermore, for each convolutional code there exists a representative of the associated equivalence class, $C\subset \F[z]^n$, such that $C$ is a direct summand of $\F[z]^n$. In other words,  every convolutional code admits a basic generator matrix. Consequently, every convolutional code admits a canonical generator matrix (\cite{McE:98}).

Henceforth, and for the sake of brevity, instead of ``the convolutional code corresponding to the equivalence class of a $\F[z]$-submodule $C$'', we shall simply say ``the convolutional code  defined by $C$''.

The row degrees of a canonical generator matrix are invariants of the code
(up to their order) and are known as the \emph{Forney indices} of
the code, $\nu_1,\ldots,\nu_k$. The maximum of the Forney indices
is called the \emph{memory} of the code, $m$, and their sum
is the \emph{degree} (or complexity) of the code, $\delta$, and this
coincides with the highest degree of the maximal minors of the
matrix.

Similarly to the case of block codes, there is a notion of distance that determines the error-correction capability of the convolutional code
(e.g.~\cite[Theorem~3.3]{JZ:99}). Let us first recall that the weight of a polynomial vector is the
number of the non-zero coefficients; that is,
 for a word $c=(a_{10}
    +\ldots+a_{1r_1}z^{r_1},
    \ldots,
    a_{n0}
    +\ldots+a_{nr_n}z^{r_n})$, its weight is:
$$
    \operatorname{w}
    (c)
    \, :=\, \# \{a_{ij}| a_{ij}\neq 0\} \,.
$$

Thus, the free distance between two codewords is the weight of their
difference, and the \emph{free distance} of the convolutional
code, $\df$, is the minimum free distance between two
different codewords. It is therefore natural to look
for codes with the largest possible free distance. Accordingly, Rosenthal and Smarandache (\cite{RS:99}) studied the free distance of an arbitrary convolutional code and proved that it satisfies:
\begin{equation}\label{eq:SingletonBoundConvo}
   \df \leq (n-k)\left(\left\lfloor \frac{\delta}{k} \right\rfloor +1\right)+\delta+1
\end{equation}
This bound is called the \emph{generalized Singleton bound}, since
for codes of degree 0, i.e. block codes, it gives the classical
Singleton bound on the minimum distance. Consequently, \emph{MDS
convolutional codes} are defined as those whose free distance
achieves the generalized Singleton bound.
Note that for the case of $1$-dimensional codes, the generalized Singleton bound acquires
 a  simpler form:
\begin{equation}\label{eq:singletonk=1}
    \df \leq n(\delta+1) \quad .
\end{equation}

\subsection{Classification of Convolutional Codes}
In \cite{MIC:10}, a classifying space for convolutional codes was introduced, providing valuable information about the structure
of convolutional codes. Before  briefly recalling a couple of results of that paper, let us first introduce some definitions which have no counterpart in the theory of block codes.

The $i$-th \emph{column index} of a code $C$ is the maximum of the degrees of the entries of the $i$-th column of a matrix $G(z)$ as $G(z)$ varies among the set of canonical generator matrices for $C$. Let $\{n_i\}_{i=1}^n$ denote the sequence of column indices. It is known (\cite[Theorem
4.9]{MIC:10}) that there is an injective morphism:
$$
  \begin{array}{c}
  \left \{
    \begin{gathered}
          \text{convolutional codes of type $[n,k,\delta;m]$}\\
          \text{with column indices $\{n_i\}_{i=1}^n$}
    \end{gathered}
    \right \} \;
    \hookrightarrow \;
    Gr(\kappa,\mu),
  \end{array}
$$
where $\kappa=k(m+1)-\delta$, $\mu=\sum_{i=1}^n(n_i+1)$ and $Gr(\kappa,\mu)$ denotes the Grassmannian variety of $\kappa$-dimensional subspaces of a given $\mu$-dimensional vector space.

\begin{theorem}[{\cite[Theorem 4.13]{MIC:10}}]\label{thm:caract}
Convolutional codes of type $[n,k,\delta;m]$ that have different
Forney indices, i.e. $\delta<km$, are represented by an
open subset of a closed subset of the Grassmannian
$Gr(\kappa,\mu)$.

Convolutional codes of type $[n,k,\delta;m]$ which have all their
Forney indices equal, i.e. $\delta=km$, are represented by an
open subset of the Grassmannian $Gr(\kappa,\mu)$.
\end{theorem}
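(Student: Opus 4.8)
The goal is to identify the image of the injective morphism of \cite{MIC:10} from codes of type $[n,k,\delta;m]$ with column indices $\{n_i\}$ into $Gr(\kappa,\mu)$, and to recognize it as an open subset of $Gr(\kappa,\mu)$ when $\delta=km$ and as an open subset of a proper closed subvariety when $\delta<km$. I would begin by making the morphism explicit. A code $C$ of the prescribed type has a canonical generator matrix $G(z)$ with rows $g_1,\dots,g_k$ of degrees $\nu_1\ge\cdots\ge\nu_k$ satisfying $\sum_j\nu_j=\delta$ and $\nu_1=m$; since $G(z)$ is reduced, the predictable degree property shows that the $\F$-span $C_{\le m}$ of $\{z^{\ell}g_j : 0\le\ell\le m-\nu_j\}$ equals the set of codewords of degree $\le m$, has dimension $\kappa=\sum_j(m+1-\nu_j)=k(m+1)-\delta$, and lies in the $\mu$-dimensional subspace cut out by the column-degree bounds $n_i$; this subspace is the image of $C$, and $C$ is recovered as the $\F[z]$-submodule of $\F[z]^n$ generated by it, which gives injectivity.

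I would then characterize the image. Writing $W$ for the ambient $\mu$-dimensional space, $W_{\le d}\subset W$ for the subspace of elements of degree $\le d$, and $c_d:=\sum_{j:\,\nu_j\le d}(d+1-\nu_j)$, I claim that $U\in Gr(\kappa,\mu)$ arises from a code of the prescribed type if and only if: (i) $z\cdot u\in U$ for every $u\in U$ with $\deg u<m$; (ii) $\dim(U\cap W_{\le d})=c_d$ for all $d<m$; and (iii) the $\F[z]$-module generated by $U$ has column indices exactly $\{n_i\}$ and admits a basic generator matrix. The forward direction is immediate from the construction. For the converse, from a graded basis of $U$ one extracts, at each degree level, the generators not obtained by shifting lower ones; the jumps of $c_d$ force these to have degrees $\nu_1,\dots,\nu_k$, condition (ii) together with $\F[z]$-linear independence of the extracted rows (which follows from basicness) forces the extracted matrix to be reduced — a relation among leading coefficients would produce an unexpected element of $U\cap W_{\le m-1}$, violating (ii) — and condition (i) guarantees that the submodule generated by these rows has exactly $U$ as its degree-$\le m$ part. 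Hence the resulting code is canonical of type $[n,k,\delta;m]$ with the prescribed column indices, and its image is $U$.

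Next I would separate open and closed conditions. The inequalities $\dim(U\cap W_{\le d})\ge c_d$ ($d<m$) are closed by semicontinuity and define a Schubert-type subvariety of $Gr(\kappa,\mu)$ attached to the degree flag $W_{\le 0}\subset\cdots\subset W_{\le m}=W$; the reverse inequalities are open, and on the locus where (ii) holds with equalities the intersections $U\cap W_{\le d}$ are the fibres of vector bundles, so there the shift-stability (i) is the vanishing of a bundle morphism into $W/U$, hence closed, while basicness and ``column indices exactly $n_i$'' are open. Packaging this, the image equals $X\cap V$ with $V$ open and $X$ the closure in $Gr(\kappa,\mu)$ of the locus cut out by the ``$\ge$'' conditions together with (i) on the equality stratum, and is therefore an open subset of the closed subvariety $X$. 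When $\delta=km$ every $\nu_j=m$, so $c_d=0$ for $d<m$: condition (ii) becomes $U\cap W_{\le m-1}=0$, an open condition; (i) has empty content there; $X=Gr(\kappa,\mu)$; and the image is open in the full Grassmannian. When $\delta<km$ some $\nu_j<m$, so $c_{m-1}=\kappa-k>0$, and the condition $\dim(U\cap W_{\le m-1})\ge c_{m-1}$ together with shift-stability carves out a proper closed subvariety $X\subsetneq Gr(\kappa,\mu)$, so the image is an open subset of a proper closed subvariety.

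The main obstacle I anticipate is the closedness bookkeeping for condition (i): stated verbatim, ``$z\cdot(U\cap W_{\le d})\subseteq U$'' is not closed on $Gr(\kappa,\mu)$, since $\dim(U\cap W_{\le d})$ jumps and a limit of shift-stable subspaces can fail to be shift-stable; one must either pass to the incidence (Grassmann-bundle) variety of the degree flag, or argue stratum by stratum and control the relevant closures, to obtain an honest closed subscheme — and, dually, one must check that (i)–(iii) produce no spurious subspaces and are compatible with the prescribed column indices. A clean vehicle for this bookkeeping, and for verifying that $X$ is genuinely proper when $\delta<km$, is the moduli / infinite-Grassmannian framework of \cite{MIC:10}, in which these incidence conditions are built in from the outset.
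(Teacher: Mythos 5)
First, a point of comparison: the paper you are checking against does not prove this statement at all --- it is quoted from \cite{MIC:10} (Theorem 4.13 there), so your proposal must stand on its own as a reconstruction of that external argument, and it cannot legitimately lean on ``the moduli / infinite-Grassmannian framework of \cite{MIC:10}'' to finish the hard step, since that is precisely the result to be proved; as written, your last paragraph is circular.

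The substantive gap is in your characterization of the image when $\delta<km$. The intermediate dimensions $c_d=\sum_{j:\,\nu_j\le d}(d+1-\nu_j)$ depend on the individual Forney indices $\nu_1,\dots,\nu_k$, not only on the type $[n,k,\delta;m]$ (nor on the column indices): for instance $k=3$, $m=3$, $\delta=7$ admits both $(3,3,1)$ and $(3,2,2)$, which give $c_1=1$ and $c_1=0$ respectively. Hence your conditions (i)--(iii) cut out only the stratum of codes with one fixed Forney-index pattern, whereas the set in the theorem is the union over all patterns with $\nu_1=m$ and $\sum_j\nu_j=\delta$; a finite union of locally closed subsets need not be locally closed, and showing that this particular union is open in a closed subset is exactly the content of the $\delta<km$ case, which your proposal never addresses. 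Your own flagged obstacle compounds this: asserting that the image equals $X\cap V$ with $X$ the closure of the equality stratum is not a proof that the image is open in $X$ (limit points of the stratum may satisfy the open conditions without arising from codes); the cleaner device --- a set closed inside a locally closed stratum is locally closed --- repairs the single-pattern bookkeeping but still leaves the union problem untouched. A secondary issue: with the paper's definition of the column index $n_i$ (maximum degree of the $i$-th column over canonical generator matrices), it is not immediate that your space $\langle z^{\ell}g_j:0\le\ell\le m-\nu_j\rangle$ lies inside the $\mu$-dimensional space cut out by the bounds $\deg v_i\le n_i$; that requires the sharper estimate $\deg g_j^{(i)}\le n_i-(m-\nu_j)$, so even the construction of the morphism and its injectivity need more care than your first paragraph gives. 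The one part that is essentially sound, modulo these points, is the $\delta=km$ case, where there is a single pattern, $c_d=0$ for $d<m$, the shift condition is vacuous, and openness follows from semicontinuity plus the openness of basicness and of the column-index conditions.
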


%

\subsection{Zariski Topology}\label{subsec:Zariski}

Since this paper deals with the study of some algebraic properties as certain parameters vary, it is natural to consider the Zariski topology. Indeed, \emph{open} and \emph{closed} in the statement of Theorem~\ref{thm:caract} refer to the Zariski topology. Although
this topology can be introduced for very general spaces (e.g.
schemes), the case of the affine space will suffice for our
purposes (\cite{AM:69}). The reason for this assumption relies on the two results above and on the fact that Grassmannian varieties are covered by affine spaces.

A subset $Z$ of the affine space $\F^{r+1}$ is called
\emph{Zariski closed} if it is defined by a finite number of
algebraic relations; that is, there exist polynomials
$p_i(\lambda_0,\ldots,\lambda_r)\in \F[\lambda_0,\ldots,\lambda_r]$ for $i=1,\ldots,s$ such that $Z$
consists of the common zeroes of $p_1,\ldots, p_s$:
    {\small $$
    Z= \big\{ (\alpha_0,\ldots,\alpha_r)\in\F^{r+1} \vert 
    p_i(\alpha_0,\ldots,\alpha_r)=0, 
    1\leq i\leq s\big\}
    $$}
Accordingly, a subset $U\subseteq \F^{r+1}$ is called
\emph{Zariski open} if and only if its complement, $\F^{r+1}\setminus U$, is
Zariski closed.

Moreover, for any field extension $\F\hookrightarrow \F'$ there is a bijective correspondence between the set of $\F'$-valued points of $Z$:
    {\small $$
    Z(\F'):= \big\{ (\alpha_0,\ldots,\alpha_r)\in (\F')^{r+1} \vert
    p_i(\alpha_0,\ldots,\alpha_r)=0,
    1\leq i\leq s\big\}
    $$}
and the set of maps of $\F$-algebras:
    $$
    \F[\lambda_0,\ldots,\lambda_r]/(p_1,\ldots,p_s) \,\to \, \F' \, .
    $$
More explicitly, the map associated with $(\alpha_0,\ldots,\alpha_r)$ sends $\lambda_i$ to $\alpha_i$ for all $i$.

Similarly, one defines the set of $\F'$-valued points of an open subset $U$. It will be denoted by $U(\F')$.

A well known criterion for a subset to be closed, which will be used in the proof of Theorem~\ref{thm:dfreelowersemicont}, is that a subset $Z\subseteq \F^{r+1}$ is closed if and only if for every local and integral $\F$-algebra $A$ and every morphism $\F[\lambda_0,\ldots,\lambda_r]\to A$ the following condition holds: if the composition $\F[\lambda_0,\ldots,\lambda_r]\to A\to A_{(0)}$ defines a point of $Z$, then the composition $\F[\lambda_0,\ldots,\lambda_r]\to A\to A/{\mathfrak m}$ also defines a point of $Z$. Here, $A_{(0)}$ denotes the function field of $A$ and ${\mathfrak m}$ the maximal ideal of $A$.

\section{The free distance}\label{sec:freedist}

Let us begin this section with an example of a convolutional code depending on a parameter. The study of its structure unveils some of its most relevant properties. Indeed, these properties do hold for general codes, as  will be proved rigorously in the following subsections.

\subsection{A Case Study}\label{subsec:casestu}

Let us work on the $2^r$-ary alphabet; that is, on the base field $\F_{2^r}$. We consider the matrix:
    {\small $$
    \left(
    \begin{array}{cccc}
    \lambda + z & \lambda + 1 + z  & \lambda z & 1 + (\lambda + 1) z  \\
    \lambda^2+(\lambda+1)z & 1+z & \lambda+(\lambda+1)z & (\lambda+1)^2+\lambda z
    \end{array}
    \right)\; ,
    $$}
and let $C_{\lambda}$ be its image. After some computation, we see that this matrix is a basic generator matrix of $C_{\lambda}$, for $\lambda$ an arbitrary element of $\F_{2^r}\setminus \{0\}$. In this case, $C_{\lambda}$ has length $n=4$, rank $k=2$, degree  $2$,  memory  $1$  and, consequently, the generalized Singleton bound is $7$ (see equation~(\ref{eq:SingletonBoundConvo})).

Let us choose a particular case; namely, let $r=3$ and let $\lambda\in \F_8=\F_{2^3}$ be an element satisfying $\lambda^3+\lambda+1=0$. After some computations, one has that the sequence of row distances of $C_{\lambda}$ (see~\cite[Chapter~3]{JZ:99}) is the constant sequence $7,7,7,\ldots$, such that the free distance of $C_{\lambda}$ is $7$, and, therefore, it is MDS.

Let us now discuss the general situation; i.e. $r\geq 3$ and $\lambda$ arbitrary. First, let us express the above generator matrix as $G(z)=G_0 +
zG_1$, with $G_0,G_1$ matrices over $\F_{2^r}[\lambda]$. In this example, we observe that the determinant of ${G_0 \choose G_1}$ appears in the list of $2\times2$-minors of $(G_0\,|\,G_1)$ and, therefore, it can be
checked that $C_{\lambda}$ is MDS if and only if  the matrix $(G_0\,|\,G_1)$
generates an MDS block code; i.e.  all the $2\times2$-minors of $(G_0\,|\,G_1)$ are non-zero.   Summing up, $C_{\lambda}$ fails to be MDS if $\lambda$ satisfies any of the following equations:
    $$
    \begin{array}{l}
       \lambda + 1 = 0\\
       \lambda^2+\lambda+1 = 0\\
       \lambda^3+\lambda^2+1=0
    \end{array}
    $$

We also observe that the number of values of $\lambda$ for which $C_{\lambda}$ fails to be MDS is finite.

Summing up, we have observed the following three phenomena:
\begin{enumerate}
    \item Although we know that the sequence of row distances converges to the free distance, there is no estimation for the stage in which the sequence reaches $\df(C_{\lambda})$;
    \item If $C_{\lambda}$ is MDS for $\lambda\in\F_{2^r}$ and $\F_{2^r}\hookrightarrow\F'$, then the code generated by $C_{\lambda}$ over $\F'$ is also MDS; or, in simpler words, the property of being MDS depends only on $\lambda$ and not on the base field;
    \item In a family of codes depending on parameters, the subset of the parameter space consisting of those values for which the code fails to be MDS is defined by a finite number of algebraic relations.
    \end{enumerate}

The three following subsections are devoted to an in-depth study of these issues.

%
%
%
%
%
%

\subsection{Computation of the free distance}\label{subsec:l(C)}

Let us consider a convolutional code $C$ of dimension
$k$, length $n$ and degree $\delta$, with a canonical generator matrix $G(z)$ decomposing as:
    $$
    G(z)=G_0+G_1z+\ldots+G_\delta z^\delta
    $$
where $G_i$ are $k\times n$ matrices with entries in $\F$. In this case, $G_0$ has maximal rank and, therefore, the linear map defined by the $k(l+1)\times
n(\delta+l+1) $-matrix:
    \begin{equation}\label{eq:Gk=slidding}
    {\small\begin{pmatrix}
        G_{0} & \dots & G_{\delta} & 0 & \dots &  0 \\
        0 & G_{0} & \dots & G_{\delta} & \ddots  &  \vdots
        \\
        \vdots &  \ddots & \ddots & & \ddots  &  0 \\
        0 & \dots & 0 & G_{0} &\dots &   G_{\delta}
        \end{pmatrix}}\;
        :\F^{k(l+1)}\to \F^{n(\delta+l+1)}
        \, ,
    \end{equation}
is injective for $l\geq 0$. Let $C_l$ be the linear code defined by the image of this map and let $d^r_l$ denote the distance of  $C_l$. In \cite[Chapter~3]{JZ:99}, $d^r_l$ is called the $l$-th \emph{row distances}. Since any basic encoder is non-catastrophic (\cite{MS:68}), it follows from
\cite[Chapter~3]{JZ:99} that starting with a basic encoder $G(z)$, the sequence $\{d^r_l\}$ is non-increasing and  eventually
converges to the free distance of the convolutional code; that is:
    \begin{equation}\label{eq:dfree=drl}
    \df(C) \,=\,
    \underset{l\geq 0}{\operatorname{min}} \{d^r_l\}\, .
    \end{equation}

%
%
%

However, an explicit description of the step in which the sequence of row
distances reaches the free distance has not been given. Our next
result tackles this problem.

\begin{theorem}\label{th:l(G)}
Let $C$ be a convolutional code as above.
Let $C^0$ be the linear code defined by the image of the map:
    \begin{equation}
    \begin{pmatrix}
            G_{\delta} \\
            G_{\delta-1}  \\
            \vdots  \\
            G_{0}
            \end{pmatrix}\colon
            \; {\mathbb F}^{k(\delta+1)} \,\to\, {\mathbb F}^{n}
    \end{equation}
and let $\nu$ and $\mu$ be the dimension and the distance of $C^0$.

If $\nu$ is maximal, i.e. $k(\delta+1)= \nu$, then:
    $$
    \df(C) \, =\, d^r_{\operatorname{l}(C)}
    $$
where: \\
\begin{equation}\label{eq:lambdaG=integerpart}
        \operatorname{l}(C) \,:= \, 
    \left\lfloor \frac{1}{\mu}
        \Big(
            (n-k)\big(\left\lfloor \frac{\delta}{k} \right\rfloor +1\big)+\delta+1
        \Big)\right\rfloor
        \; -(\delta +1)
\end{equation}
\end{theorem}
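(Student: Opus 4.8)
The plan is to exploit the convergence $\df(C)=\min_{l\ge 0}\{d^r_l\}$ from~\eqref{eq:dfree=drl} together with a lower bound on the row distances $d^r_l$ that grows with $l$. The key observation is that when $\nu=k(\delta+1)$, the matrix in~\eqref{eq:Gk=slidding} has a block-triangular structure whose ``corner'' blocks are (up to reordering rows) the matrix defining $C^0$. Concretely, a nonzero codeword of $C_l$ is the image of an input vector $(u_0,\ldots,u_l)\in\F^{k(l+1)}$ with $u_0,u_l$ not both zero (in fact, by non-catastrophicity one may assume the trajectory genuinely starts and ends), and the first $n$ coordinates of the output are $u_0G_0$ while the last $n$ coordinates are $u_lG_\delta$; more generally, each ``window'' of $\delta+1$ consecutive input blocks $u_j,\ldots,u_{j+\delta}$ contributes, through the sliding structure, a vector lying in $C^0$ (read with the rows permuted). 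Since $\nu$ is maximal, the map defining $C^0$ is injective, so whenever $(u_j,\ldots,u_{j+\delta})\ne 0$ the corresponding output window is a nonzero codeword of $C^0$ and hence has weight $\ge\mu$.

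First I would make precise this ``covering of the codeword by shifted copies of $C^0$'': partition the index range $\{0,\ldots,l\}$ of input blocks into consecutive groups of size $\delta+1$, obtaining roughly $\lfloor (l+1)/(\delta+1)\rfloor$ disjoint windows; each window on which the input is nonzero forces weight $\ge\mu$ in a disjoint set of output coordinates. Because a nonzero input trajectory of length $l+1$ cannot vanish on all windows, one gets $d^r_l\ge\mu\cdot(\text{number of windows hit})$. The cleanest quantitative version: if $l+1\ge(\delta+1)$ then $d^r_l\ge\mu\bigl\lfloor (l+1)/(\delta+1)\bigr\rfloor$, and this lower bound strictly exceeds the generalized Singleton bound as soon as $\mu\bigl\lfloor (l+1)/(\delta+1)\bigr\rfloor > (n-k)(\lfloor\delta/k\rfloor+1)+\delta+1$, i.e. precisely once $l > \operatorname{l}(C)$ after unwinding the floors. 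Hence for all $l>\operatorname{l}(C)$ one has $d^r_l>\df(C)$ (using~\eqref{eq:SingletonBoundConvo}), so the minimum in~\eqref{eq:dfree=drl} is already attained among $l\le\operatorname{l}(C)$; combined with the fact that $\{d^r_l\}$ is non-increasing, the minimum — and therefore $\df(C)$ — equals $d^r_{\operatorname{l}(C)}$.

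The step I expect to be the main obstacle is making the ``disjoint windows each of weight $\ge\mu$'' argument fully rigorous, because the trivial partition into blocks of size $\delta+1$ may leave a window whose input sub-block is zero while neighbouring inputs are nonzero, so one must argue that the number of windows actually carrying nonzero input is still large enough — essentially that a nonzero trajectory cannot be ``supported'' on fewer than $\lfloor(l+1)/(\delta+1)\rfloor$ of the $\delta+1$-windows, or else weaken the claimed bound by a controlled additive constant and check that the arithmetic in~\eqref{eq:lambdaG=integerpart} still goes through. One clean way around this is to use a sliding (overlapping) family of windows and a pigeonhole/averaging argument on the support of the output rather than of the input, or to invoke that the support of a nonzero polynomial codeword, written as $c(z)=\sum c_iz^i$ with $c_i\in\F^n$, spans a range of indices and that each length-$(\delta+1)$ sub-range of output blocks, being an image point of the injective $C^0$-map, is either zero or of weight $\ge\mu$; a short counting of how many such disjoint sub-ranges must be nonzero then yields the bound. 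Once this combinatorial lemma is in place, the remaining verification that the threshold on $l$ is exactly $\operatorname{l}(C)$ is a routine manipulation of floor functions, and the monotonicity of $\{d^r_l\}$ finishes the proof.
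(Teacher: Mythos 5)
Your overall strategy has a fatal flaw: you look for ``a lower bound on the row distances $d^r_l$ that grows with $l$'' and conclude that $d^r_l>\df(C)$ for all $l>\operatorname{l}(C)$. No such bound can exist, because the sequence $\{d^r_l\}$ is non-increasing (an input with only $u_0\neq0$ produces the same low-weight codeword for every $l$), and, being a non-increasing sequence of positive integers converging to $\df(C)$, it satisfies $d^r_l=\df(C)$ for all sufficiently large $l$. In particular a nonzero trajectory \emph{can} be supported on a single window, so the difficulty you flag (that many of your disjoint windows might carry zero input) is not a repairable technicality but the reason the plan collapses. The paper's proof circumvents exactly this point: it does not bound $d^r_l$ itself but the restricted minima $m_l$, taken only over inputs with $\alpha_0\neq0$, $\alpha_l\neq0$ and with no $\delta+1$ consecutive zero blocks $\alpha_{j-\delta}=\dots=\alpha_j=0$. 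The hypothesis $\nu=k(\delta+1)$ is what legitimizes this restriction: if the output coefficient of $z^j$ (with $\delta\le j\le l$) vanishes, injectivity of the $C^0$-map forces the whole input window to vanish, the codeword then splits into two pieces with non-interfering supports, and its weight is already at least $d^r_{j-\delta-1}\ge d^r_l\ge\df(C)$, so such words never realize the minimum. One then has $d^r_l=\min\{m_0,\dots,m_l\}$ and $\df(C)=m_{l_0}$ for some $l_0$; the growing bound $m_l\ge(l+\delta+1)\mu$ combined with the generalized Singleton bound pins down $l_0\le\operatorname{l}(C)$, and monotonicity of $\{d^r_l\}$ finishes.

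There is also a quantitative error: even where your windowing applies, counting only disjoint windows gives at best $d\ge\mu\left\lfloor (l+1)/(\delta+1)\right\rfloor$, and your claim that this threshold ``unwinds'' to $l>\operatorname{l}(C)$ is off by roughly a factor of $\delta+1$: it would require $l$ on the order of $(\delta+1)S/\mu$ rather than $S/\mu-(\delta+1)$, where $S$ denotes the generalized Singleton bound, so the theorem with the stated $\operatorname{l}(C)$ would not follow. To reach $\operatorname{l}(C)$ exactly you must use all the overlapping windows, i.e. that for the restricted inputs every one of the $l+\delta+1$ output coefficients is a nonzero word of $C^0$ and hence has weight at least $\mu$, which is precisely how the paper obtains $m_l\ge(l+\delta+1)\mu$.
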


\begin{proof}
Let us consider a non-zero
polynomial vector of degree $l\geq \delta$, $\alpha(z)=\sum_{i=0}^l \alpha_i z^i \in \F[z]^k$, and the codeword given by it:
    {\small
    \begin{equation}\label{eq:alpha(z).G=sum}
    \begin{aligned}
    \alpha(z)& G(z)\,=\,
    (\alpha_0,\ldots,\alpha_l)
        \begin{pmatrix} G_{0} \\ 0 \\ \vdots  \\ 0 \end{pmatrix}
        \,+\,
    (\alpha_0,\ldots,\alpha_l)
        \begin{pmatrix} G_1 \\ G_0 \\ 0 \\ \vdots  \\ 0 \end{pmatrix} z
        \,+\, \ldots \\
        &
        \,+\,
    (\alpha_0,\ldots,\alpha_l)
        \begin{pmatrix} G_\delta \\ \vdots \\ G_0 \\ 0 \\ \vdots  \\ 0 \end{pmatrix} z^{\delta}
        \,+\, \ldots
         + \,
    (\alpha_0,\ldots,\alpha_l)
        \begin{pmatrix} 0 \\ \vdots \\ 0 \\ G_{\delta} \\  \vdots \\ G_0 \end{pmatrix}
        z^{l} \,+
        \\
        &
        \,+\, \ldots \,+\,
    (\alpha_0,\ldots,\alpha_l)
        \begin{pmatrix} 0 \\ \vdots \\ 0  \\ G_{\delta} \end{pmatrix} z^{\delta+l}
    \end{aligned}
    \end{equation}
    }
Our task consists of finding a lower bound for the weight of this
codeword.

Suppose that there exists $j$, with $\delta\leq j\leq l$,  such that the coefficient of $z^j$ vanishes. The hypothesis $k(\delta +1)= \nu$ implies that $\alpha_{j-\delta}=\alpha_{j-\delta+1}=\ldots=\alpha_{j}=0$. In that case we would have:
    {\small $$
    (\sum_{i=0}^l \alpha_i  z^i) G(z)
        \,=\,
    (\sum_{i=0}^{j-\delta-1} \alpha_i z^i) G(z)
        \,+\,
    z^{j+1}(\sum_{i=j+1}^{l} \alpha_{i} z^{i-(j+1)}) G(z)
    \, .
    $$}
Noting that the coefficients of the second term on the r.h.s.
cannot cancel the coefficients of the first term, one has that:
\begin{multline*}
    \operatorname{w}\big((\sum_{i=0}^l \alpha_i  z^i)\cdot G(z)\big)
        \,\geq \,
    \operatorname{w}\big((\sum_{i=0}^{j-\delta-1} \alpha_i z^i)\cdot G(z)\big)
    \,\geq \\
    \,\geq \, d^r_{j-\delta-1} \,\geq \, d^r_l \,\geq \,
    \df(C)
\end{multline*}

Therefore, when bounding the minimum weight of codewords from
below we can leave aside those words with
$\alpha_{j-\delta}=\alpha_{j-\delta+1}=\ldots=\alpha_{j}=0 $ for
certain $\delta\leq j\leq l$. Let us define:
    {$$m_l\,:=\,
    \operatorname{min}
    \big\{
    \operatorname{w}\big((\sum_{i=0}^l \alpha_i  z^i) G(z)\big)
    :
    \alpha_0\neq 0 , \alpha_l\neq 0
    \big\}
    $$}
 for $l < \delta$, and:
    {\small $$
    m_l\, :=\, \operatorname{min}
    \left\{
    \begin{gathered}
    \operatorname{w}\big((\sum_{i=0}^l \alpha_i  z^i) G(z)\big)
    \text{ such that } \alpha_0\neq 0 , \alpha_l\neq 0
    \\
    \text{and } \nexists j\in\{\delta,\ldots, l\} \text{ with }
    \alpha_{j-\delta}=
    \ldots=\alpha_{j}=0
    \end{gathered}
    \right\}
    \, .
    $$}
for $l\geq \delta$.

The previous discussion shows that:
    \begin{equation}\label{eq:G_lambda=minm_k}
    d^r_l\,=\,
    \operatorname{d}(C_l) \,=\,
    \operatorname{min}\{m_0,\ldots, m_l\}
    \end{equation}

Let us bound $m_l$ from below. First, let us consider $l<\delta$. The hypothesis $k(\delta+1)= \nu$, shows that $k(j+1)= \operatorname{rk}{\tiny \begin{pmatrix}  \\ G_{j} \\  \vdots \\ G_0 \end{pmatrix}}$ for all $j\leq l$. In particular, if $\alpha_0\neq 0$ then the coefficients of $z^0,z,\ldots, z^\delta$ cannot vanish. Similarly, $\alpha_l\neq0$ implies that the coefficients of $z^l,\ldots,z^{\delta+l}$ do not vanish either. Consequently,
	$$
	m_l\,\geq \,  (l+\delta +1)\mu \qquad \forall l <\delta
	$$
Second, let $l\geq \delta$. The above arguments imply that the coefficients of $z^0,z,\ldots, z^{l+\delta}$ do not vanish and, thus:
	$$
	m_l\,\geq \,  (l+\delta +1)\mu \qquad \forall l \geq \delta
	$$
	
Having in mind equations~(\ref{eq:dfree=drl}) and~(\ref{eq:G_lambda=minm_k}), we know that there exists $l_0$ such that
$\df(C)=m_{l_0}$. Recalling the
Singleton bound for $C$
(equation~(\ref{eq:SingletonBoundConvo})), we derive the following
chain of inequalities:
	{\small $$
    (n-k)\big(\left\lfloor \frac{\delta}{k} \right\rfloor +1\big)+\delta+1
    \,\geq \,
    \df(C)
    \,=\,      m_{l_0}
    \, \geq \, (l_0+\delta+1) \mu
$$}
and thus:
    $$
    l_0\,\leq \,
    \frac{1}{\mu}
        \Big(
            (n-k)\big(\left\lfloor \frac{\delta}{k} \right\rfloor +1\big)+\delta+1
        \Big)
        \; -(\delta +1)
    $$
and the conclusion follows.
\end{proof}

It is worth pointing out that the previous result provides a method to compute the free distance of a convolutional
code in terms of the distance of a linear code. More precisely, we have the following:
\begin{algorithm}
For a convolutional code with a canonical generator matrix
$G(z)$, its free distance is given by the following procedure:
    \begin{enumerate}
	\item compute the dimension and the distance of $C^0$; that is,  $\nu$ and $\mu$ respectively;
    \item  check if $k(\delta+1)= \nu$; if yes, continue;
    \item compute $\operatorname{l}(C)$ by
    equation~(\ref{eq:lambdaG=integerpart});
    \item  $\df(C)$ is given by the distance of the linear code
    $C_{\operatorname{l}(C)}$ (see
    equation~(\ref{eq:Gk=slidding})).
    \end{enumerate}
\end{algorithm}

\subsection{Preservation of $\df$ by enlargements of the alphabet}\label{subsec:enlargealphabet}

Similarly to the relation between BCH and RS codes, we begin by studying whether the free distance changes when the alphabet is enlarged or, equivalently, when the base field $\F$ is replaced by a finite extension of it, say $\F'$. Indeed, if $G(z)$ is a matrix with entries in $\F[z]$ generating a convolutional code $C$, then the words of $C$ are $\F[z]$-linear combinations of the rows of $G(z)$. Hence, for a finite extension $\F\hookrightarrow\F'$, and considering $\F'[z]$-linear combinations of the rows of $G(z)$, we obtain another convolutional code $C'$ which is related to $C$ by the identity $C':= C\otimes_{\F}\F'$.


\begin{theorem}\label{thm:FreeDistBaseChange}
The free distance is preserved by enlargements of the alphabet.
\end{theorem}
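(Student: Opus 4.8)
The plan is to exploit the algebraic description of the free distance established in the previous subsection. Fix a generator matrix $G(z) \in \F[z]^{k\times n}$ for $C$; without loss of generality we may take $G(z)$ to be canonical, since a canonical generator matrix over $\F$ remains basic and reduced over any finite extension $\F'$ (coprimality of the maximal minors and the reducedness condition are unchanged by the base extension $\F \hookrightarrow \F'$, because these are properties of the polynomial entries that are detected already over $\F$). Thus $G(z)$ is also a canonical generator matrix for $C' = C\otimes_\F \F'$, and in particular $C$ and $C'$ have the same length $n$, the same dimension $k$, and the same degree $\delta$.

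The key observation is that the row distances $d^r_l$ of the convolutional code coincide with the minimum distances of the block codes $C_l$ defined by the sliding matrices in~(\ref{eq:Gk=slidding}), and those matrices have entries in $\F$. So the problem reduces to the following elementary fact about block codes: if a generator matrix with entries in $\F$ is extended to $\F'$, the minimum distance does not change. First I would prove this auxiliary claim. One inequality is trivial: enlarging the field can only add codewords, so the minimum distance can only decrease or stay the same, i.e. $d(C_l \otimes_\F \F') \le d(C_l)$. For the reverse inequality, suppose $c = v \cdot M$ is a nonzero $\F'$-codeword of minimal weight $w$, where $M$ is the $\F$-matrix and $v \in (\F')^{k(l+1)}$. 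Its support is a set $S \subseteq \{1,\dots,n(\delta+l+1)\}$ with $|S| = w$. The submatrix $M_{\bar S}$ consisting of the columns of $M$ outside $S$ then has a nonzero vector $v$ in its left kernel over $\F'$; hence $\operatorname{rk}_{\F'} M_{\bar S} < k(l+1)$, and since rank is insensitive to field extension, $\operatorname{rk}_\F M_{\bar S} < k(l+1)$ as well, so there is a nonzero $v' \in \F^{k(l+1)}$ in the left kernel of $M_{\bar S}$. Then $v' M$ is a nonzero $\F$-codeword of $C_l$ supported inside $S$, hence of weight $\le w$. Therefore $d(C_l) \le d(C_l \otimes_\F \F')$, and equality holds.

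Combining these, $d^r_l(C) = d^r_l(C')$ for every $l \ge 0$, and by~(\ref{eq:dfree=drl}), $\df(C) = \min_{l \ge 0}\{d^r_l(C)\} = \min_{l \ge 0}\{d^r_l(C')\} = \df(C')$, which is the assertion. I do not expect a serious obstacle here; the only point requiring a little care is the first paragraph — verifying that canonicity is preserved under the base extension so that the degree $\delta$ (and hence the indexing in~(\ref{eq:dfree=drl})) is genuinely the same for $C$ and $C'$ — but this follows from the standard characterizations of basic and reduced matrices recalled in \S\ref{subsec:convcodes}, all of which are properties of the polynomial entries of $G(z)$ stable under extension of scalars. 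Alternatively, one can bypass this by noting that any basic encoder over $\F$ stays basic over $\F'$, and that~(\ref{eq:dfree=drl}) holds for any basic encoder once one replaces $\delta$ by the memory, so the argument goes through verbatim with the sliding matrices of~(\ref{eq:Gk=slidding}).
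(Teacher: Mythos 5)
Your proof is correct, but it takes a genuinely different route from the paper's. The paper argues directly on codewords: it picks a nonzero minimal-weight codeword $\beta(z)\cdot G(z)$ of $C'=C\otimes_\F\F'$, sets $\F'':=\F(\{\beta_{i,j}\})$, and applies an $\F$-linear form $\omega:\F''\to\F$ coefficientwise; if every such $\omega$ annihilated all the $\beta_{i,j}$ the codeword would be zero, so some $\omega$ produces a nonzero codeword of $C$ of weight at most that of the chosen one, giving $\df(C)\le\df(C')$ (the reverse inequality being the same trivial inclusion you use). That argument needs no canonical encoder, no row distances and no non-catastrophicity; a similar specialization of a minimal-weight codeword reappears in the proof of Theorem~\ref{thm:dfreelowersemicont}. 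Your route instead reduces to the block codes $C_l$ via~(\ref{eq:dfree=drl}) and proves base-change invariance of block-code distance by a rank/support argument. This is sound, granted two points: that a canonical (in fact, merely basic) matrix over $\F$ stays basic over $\F'$ — coprimality of the maximal minors, i.e.\ their gcd, and the reducedness rank condition are stable under extension of scalars, as you note — so that~(\ref{eq:dfree=drl}) applies to $C'$ with the very same sliding matrices; and, implicitly, that $v'M\neq 0$, which holds because the map in~(\ref{eq:Gk=slidding}) is injective ($G_0$ has full rank since $G(z)$ is basic, as the paper records). What your approach buys is slightly more than the statement: every row distance $d^r_l$ is itself preserved under the extension, not only their minimum. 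What the paper's approach buys is economy of hypotheses: the linear-form trick works with any generator matrix and does not invoke the convergence of the row-distance sequence.
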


\begin{proof}
With the previous notations, the following equation must be proved:
    $$
    \df(C) \,=\, \df(C')
    $$

Since $\F\hookrightarrow\F'$, every word of $C$ can be also understood as a word of $C'$. Thus, $\df(C)\geq \df(C')$.

Let us now check the opposite inequality. Let $G(z)$ be a generator matrix for $C$. It is therefore also a generator matrix for $C'$. Let us choose a non-zero codeword of $C'$ of minimal weight, which will be of the type:
    {\small
    \begin{multline}\label{eq:codewordC'}
    \big(
    \sum_j \alpha_{1,j}z^j , \ldots, \sum_j \alpha_{n,j} z^j\big)
    \,=\\
    \big(
    \sum_j \beta_{1,j}z^j , \ldots, \sum_j \beta_{k,j} z^j\big) \cdot G(z)
    \end{multline}}
where $\beta_{i,j}\in\F'$. Let us consider $\F'':=\F(\{\beta_{i,j}\}_{i,j})$, which is a finite extension of $\F$. Let $\omega:\F''\to \F$ be a $\F$-linear form. Since the following identity holds:
    {\small
    \begin{multline}\label{eq:codewordC}
    \big(
    \sum_j \omega(\alpha_{1,j})z^j , \ldots, \sum_j \omega(\alpha_{n,j}) z^j\big)
    \,=\\
    \big(
    \sum_j \omega(\beta_{1,j})z^j , \ldots, \sum_j \omega( \beta_{k,j}) z^j\big) \cdot G(z)
    \end{multline}}
it follows that its left hand side is a codeword of $C$. Thus, if there exists $\omega$ and $i,j$ such that $\omega(\beta_{i,j})\neq 0$, then it follows that the weight of the codeword~(\ref{eq:codewordC}) is non-zero and equal to or smaller than the weight of the codeword~(\ref{eq:codewordC'}) and, therefore, $\df(C) \,\leq \, \df(C')$. On the other hand, let us assume that $\omega(\beta_{i,j})= 0$ for all $\omega$ and all $i,j$. Since $\F''$ is a finite dimensional $\F$-vector space, it follows that $\beta_{i,j}=0$ for all $i,j$ and hence the codeword~(\ref{eq:codewordC'}) is zero, which contradicts our hypothesis.
\end{proof}

\subsection{Variation of $\df$ along a family}\label{subsec:variationdfree}

Our second result concerning the free distance is related to its variation along the members of a family of convolutional codes. In particular,
we are interested in how the condition of being MDS behaves.

Inspired by the definition of a convolutional code
(see~\S\ref{subsec:convcodes}), let us introduce the notion of a
family of convolutional codes; that is, a convolutional code
depending on parameters $(\lambda_0,\ldots,\lambda_r)$, where
$\lambda_i$ takes values in $\F$.


\begin{definition}\label{def:FamilyConvCode}
A family of convolutional codes of length $n$ and rank $k$ defined over an open subset $U$ of $\F^{r+1}$ consists of a locally free $\F[\lambda_0,\ldots,\lambda_r][z]$-submodule ${\mathcal C}$ of $\F[\lambda_0,\ldots,\lambda_r][z]^n$ such that the restriction of $\F[\lambda_0,\ldots,\lambda_r][z]^n/{\mathcal C}$ to $U$ is locally free of rank $n-k$. $U$ will be called the \emph{parameter space} of the family.
\end{definition}

For the sake of clarity, let us interpret this definition in terms of matrices. First, note that we are replacing our field $\F$ by the $\F$-algebra  $R:=\F[\lambda_0,\ldots,\lambda_r]$ and, in particular, that the elements of $R[z]$ are polynomials in $z$ whose coefficients are polynomials in $\lambda_0,\ldots,\lambda_r$. Now let ${\mathcal G}(z)$ be a $k\times n$-matrix with entries in $R[z]$. Therefore, for each point $(\alpha_0,\ldots,\alpha_r)$ of the affine space $\F^{r+1}$ we evaluate the entries of ${\mathcal G}(z)$ at $\lambda_i=\alpha_i$ for $i=0,\ldots,r$ and obtain a $k\times n$-matrix, say ${\mathcal G}_{\alpha}(z)$, with entries in $\F[z]$. Now, let $Z$ be the subset of points $\alpha\in\F^{r+1}$ such that ${\mathcal G}_{\alpha}(z)$
has rank smaller than $k$. Let $V$ be the set of points $\alpha\in\F^{r+1}$ such that there exists a neighborhood of $\alpha$ in which  $R[z]^n/{\mathcal C}$ is free. Observe that $Z$ is closed and $V$ is open.  Therefore, the submodule ${\Im}({\mathcal G}(z))$ defines a family of convolutional codes of length $n$ and rank $k$ over the open subset $U:= V\setminus Z$.

Recall that, if a generator matrix is given, $V$ consists of those points of the parameter space where the minors of maximal rank of the generator matrix are coprime and, consequently, the generator matrix yields a canonical generator matrix of ${\mathcal C}\vert_V$.

From now on, calligraphic letters (such as ${\mathcal C}, {\mathcal G},\ldots$) will refer to families, while roman letters (i.e. $C$, $G$,\ldots) will be used for the case of a convolutional code over a field.

\begin{remark}
For $r=0$ one has that $R[z]=\F[\lambda_0,z]$ and hence the
entries of the generator matrix of a family over $\F$ are
polynomials in two variables, namely, $\lambda_0$ and $z$.
Therefore, 2D convolutional codes (\cite{FV:94}) are instances of
$1$-parameter families of convolutional codes. Similarly,  convolutional codes depending on several variables might be considered as families in
the sense of Definition~\ref{def:FamilyConvCode}.
\end{remark}

Accordingly,  if ${\mathcal C}$ is a family of convolutional codes over the
parameter space $U\subseteq \F^{r+1}$, we may consider the \emph{map of sets}:
    \begin{equation}\label{eq:dfree}
    \begin{aligned}
    \df: U\, &
    \longrightarrow\, {\mathbb Z}
    \\
    \alpha=(\alpha_0,\ldots,\alpha_r) \,&
        \longmapsto \, \df({\mathcal C}_{\alpha})
    \end{aligned}\, .
    \end{equation}
However, the previous subsection shows that $\df({\mathcal C}_{\alpha})$ is well defined, disregarding whether $(\alpha_0,\ldots,\alpha_r)$ is considered as a point with coordinates in $\F$ or in an extension $\F\hookrightarrow \F'$. Consequently, $\df$ \emph{is really a function} when $U$ is endowed with the Zariski topology; i.e. the topology inherited from $\F^{r+1}$. Before proving an important property of it, let us recall that a function $f:X\to {\mathbb Z}$ is called
lower semi-continuous if and only if:
    $$
    f^{-1}\big( (N,+\infty)\big)
    \, :=\,
    \{x\in X \,\vert\, f(\alpha) >N \}
    $$
is Zariski open for every $N\in {\mathbb Z}$. Here,
$(N,+\infty)\subset {\mathbb Z}$ denotes the open interval.

\begin{theorem}\label{thm:dfreelowersemicont}
Let ${\mathcal C}$ be a family of convolutional codes over the
parameter space $U\subseteq \F^{r+1}$.

The function $\df:U\to {\mathbb Z}$ of equation~(\ref{eq:dfree})
is lower semi-continuous.
\end{theorem}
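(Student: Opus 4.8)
The plan is to prove the equivalent statement that, for every $N\in{\mathbb Z}$, the set
$Z:=(\F^{r+1}\setminus U)\cup\{\alpha\in U\,\vert\,\df({\mathcal C}_\alpha)\leq N\}$
is Zariski closed in $\F^{r+1}$; indeed its complement is exactly $\{\alpha\in U\,\vert\,\df({\mathcal C}_\alpha)>N\}$, whose openness in $U$ is the assertion of the theorem. To prove that $Z$ is closed I would apply the criterion recalled in \S\ref{subsec:Zariski}: let $A$ be a local integral $\F$-algebra with maximal ideal ${\mathfrak m}$, residue field $\kappa=A/{\mathfrak m}$ and function field $K=A_{(0)}$, and let $\F[\lambda_0,\dots,\lambda_r]\to A$ be a morphism whose generic point lies in $Z$; one must show its closed point lies in $Z$ too. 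If the generic point is outside $U$, then its closure, and in particular the closed point, stays in the closed set $\F^{r+1}\setminus U\subseteq Z$; and if the closed point itself falls outside $U$ there is nothing to prove. Hence the only substantial case is the one in which both points map into $U$ and $\df({\mathcal C}_K)\leq N$, and we must deduce $\df({\mathcal C}_\kappa)\leq N$. Since the locus of $U$ on which ${\mathcal C}$ admits a canonical generator matrix is open and contains the closed point, we may pick $g\in\F[\lambda_0,\dots,\lambda_r]$, a unit in $A$, such that over $\F[\lambda_0,\dots,\lambda_r]_g$ there is a canonical generator matrix ${\mathcal G}(z)$ of ${\mathcal C}$; all the codes occurring below are then obtained from ${\mathcal G}(z)$ by base change.

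The core of the argument is a degeneration of codewords. From $\df({\mathcal C}_K)\leq N$ we obtain a nonzero $\beta(z)\in K[z]^k$ with $\operatorname{w}(\beta(z)\cdot{\mathcal G}_K(z))\leq N$. In order to reduce this codeword modulo ${\mathfrak m}$ in a controlled fashion, I would first replace $A$ by a valuation ring $V$ of $K$ dominating it (such valuation rings exist by the classical theory); this keeps $K$ unchanged and replaces $\kappa$ by the extension field $\kappa_V=V/{\mathfrak m}_V$. Rescaling $\beta(z)$ by a suitable nonzero element of $K$ (which over a valuation ring is immediate, arranging that one of its coefficients becomes a unit), we may assume $\beta(z)\in V[z]^k$ with at least one coefficient not in ${\mathfrak m}_V$. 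Reducing modulo ${\mathfrak m}_V$ then produces a nonzero $\bar\beta(z)\in\kappa_V[z]^k$ together with the codeword $\bar\beta(z)\cdot{\mathcal G}_{\kappa_V}(z)$ of ${\mathcal C}_{\kappa_V}$; this word is nonzero because ${\mathcal G}_{\kappa_V}(z)$ still has rank $k$, and its weight is $\leq\operatorname{w}(\beta(z)\cdot{\mathcal G}_K(z))\leq N$ since reduction modulo an ideal can only annihilate coefficients. Therefore $\df({\mathcal C}_{\kappa_V})\leq N$; and as ${\mathcal C}_{\kappa_V}={\mathcal C}_\kappa\otimes_\kappa\kappa_V$, Theorem~\ref{thm:FreeDistBaseChange} (whose proof applies to an arbitrary field extension, not merely a finite one) yields $\df({\mathcal C}_\kappa)=\df({\mathcal C}_{\kappa_V})\leq N$, which completes the verification of the criterion.

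The step I expect to be the main obstacle is exactly this controlled reduction. Over an arbitrary local domain one cannot in general rescale a codeword so that its reduction modulo ${\mathfrak m}$ survives, which is what forces the passage to a dominating valuation ring in combination with the base-change invariance of the free distance; and one must separately ensure that the reduced word really is a codeword of the code at the closed point, which is where the hypothesis that ${\mathcal C}$ is a direct summand over $U$ (equivalently, admits a canonical generator matrix there) is used, so that the formation of the associated code commutes with the various base changes. It is also worth recording why the criterion is genuinely needed: although for each fixed $l$ the set $\{\alpha\in U\,\vert\,d^r_l({\mathcal C}_\alpha)\leq N\}$ is closed, being cut out by the vanishing of maximal minors of column-submatrices of the sliding matrix~(\ref{eq:Gk=slidding}), and although these sets increase with $l$ to $\{\alpha\in U\,\vert\,\df({\mathcal C}_\alpha)\leq N\}$ by virtue of~(\ref{eq:dfree=drl}), there is no a priori uniform bound on the index $l$ at which the stabilisation occurs and ascending chains of Zariski-closed subsets of affine space need not stabilise, so this observation by itself does not establish closedness.
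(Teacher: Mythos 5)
Your proposal is correct and follows the same strategy as the paper: verify the closedness criterion of \S\ref{subsec:Zariski} by taking a minimal-weight codeword of ${\mathcal C}_{(0)}={\mathcal C}\otimes_A A_{(0)}$, clearing denominators, and specializing it modulo the maximal ideal to produce a low-weight codeword of the code at the closed point. The one genuine difference is your degeneration step, and it is a refinement rather than a detour: the paper simply multiplies the codeword by some $b\in A$ so that its coefficients lie in $A$ and reduces modulo ${\mathfrak m}$, leaving tacit the possibility that all the resulting coefficients lie in ${\mathfrak m}$, in which case the reduced word is zero and yields no bound on $\df({\mathcal C}_{\mathfrak m})$ (over a general local domain one indeed cannot always rescale to avoid this). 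Your passage to a valuation ring dominating $A$ removes exactly this issue, since there a rescaling with a unit coefficient always exists, at the modest price of having to return from the residue field $\kappa_V$ to $\kappa=A/{\mathfrak m}$ via Theorem~\ref{thm:FreeDistBaseChange}; that appeal is legitimate because the relevant direction of its proof only needs that $\F$-linear (here $\kappa$-linear) forms separate points of the vector space generated by the finitely many coefficients, which holds for arbitrary, not necessarily finite, extensions. Your preliminary bookkeeping (points outside $U$, restriction to the locus where a canonical generator matrix exists) and your closing remark on why the row-distance sets $\{d^r_l\leq N\}$ do not directly give closedness are both sound and consistent with the paper's setting.
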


\begin{proof}
We shall  use the criterion given at the end of \S\ref{subsec:Zariski} in order to prove that the subset:
    $$
    \{ \alpha=(\alpha_0,\ldots,\alpha_r)\in U \,\vert
     \, \df({\mathcal C}_{\alpha}) \leq N
     \}
    $$
is a Zariski closed subset of $U$. Let $A$ be a local and integral $\F$-algebra and let ${\mathfrak m}$ be its maximal ideal. It suffices to consider the case where the family of convolutional codes ${\mathcal C}$ is defined over $A$ (i.e. all entries of the matrices belong to $A$). We must prove that the free distance of the code  ${\mathcal C}_{(0)}:= {\mathcal C}\otimes_A A_{(0)}$ is equal to or greater than that of the code ${\mathcal C}_{{\mathfrak m}}:= {\mathcal C}\otimes_A A/{\mathfrak m}$. Since $A$ is local, it follows that ${\mathcal C}$ is a free submodule of $A[z]^n$ and has a generator matrix ${\mathcal G}(z)$. Observe that the codewords of ${\mathcal C}_{(0)}$ are $A_{(0)}$-linear combinations of the rows of ${\mathcal G}(z)$.

Now let $ c'=(\sum_j \beta'_{1,j}z^j,\ldots, \sum_j \beta'_{k,j}z^j)\cdot{\mathcal G}(z) $ be a non-zero codeword of ${\mathcal C}_{(0)}$ of minimal weight. Since there is a finite number of non-zero coefficients, and since $A_{(0)}$ consists of quotients $\frac{a}{b}$ with $a,b\in A$ and $b\neq 0$, there exists $b\in A$ such that $c:= b\cdot c'  =(\sum_j \beta_{1,j}z^j,\ldots, \sum_j \beta_{k,j}z^j)\cdot{\mathcal G}(z)$, which is another codeword of the same weight, belongs to ${\mathcal C}$. Taking the class of $\beta_{i,j}$ modulo ${\mathfrak m}$, say $\bar\beta_{i,j}\in A/{{\mathfrak m}}$, we obtain a codeword $\bar c \in {\mathcal C}_{{\mathfrak m}}$.

It is now straightforward to see that $\df({\mathcal C}_{(0)})$, which coincides with the weight of $c'$, is equal to or greater than the weight of $\bar c$ and, therefore, equal to or greater than $\df({\mathcal C}_{\mathfrak m})$.
\end{proof}

\begin{remark}
For a family of convolutional codes ${\mathcal C}$ over the parameter space
$U\subseteq \F^{r+1}$  the length and the dimension are constant for all codes of the family.
However, the degree, the memory and the column indices may vary. To illustrate how the degree changes, let us consider the degree  as a
function:
    $$
    \begin{aligned}
    \delta:U\, &
    \longrightarrow\, {\mathbb Z}
    \\
    \alpha=(\alpha_0,\ldots,\alpha_r) \,&
        \longmapsto \, \delta({\mathcal C}_{\alpha}) \,:=\,
        \text{ degree of } {\mathcal C}_{\alpha}
    \end{aligned}
    $$
It follows straightforwardly that $\delta$ is a lower
semi-continuous function. Analogous statements can be
proved for the memory and the column indices of the family.
\end{remark}

In~\cite[\S5]{RS:99} it was shown that the subset of the set of all convolutional codes consisting of those that are MDS contains a Zariski open subset. Our previous theorem now allows us to strengthen that claim.

\begin{corollary}\label{cor:MDSopen}
The subset of MDS
convolutional codes is a Zariski open subset of the set of all convolutional codes.
\end{corollary}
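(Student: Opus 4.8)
The plan is to deduce Corollary~\ref{cor:MDSopen} directly from Theorem~\ref{thm:dfreelowersemicont}, together with the structural facts about the classifying space recalled in \S\ref{subsec:convcodes}--\S\ref{subsec:Zariski}. Recall first that MDS is a property that can only be compared among codes sharing the same discrete invariants: the generalized Singleton bound~(\ref{eq:SingletonBoundConvo}) depends on $n$, $k$ and $\delta$. So the statement ``the MDS codes form an open subset of the set of all convolutional codes'' must be interpreted componentwise: the set of all convolutional codes is stratified by the type $[n,k,\delta;m]$ together with the column indices $\{n_i\}$, and by Theorem~\ref{thm:caract} each such stratum is (an open subset of a locally closed subset of) a Grassmannian $Gr(\kappa,\mu)$. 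Hence it suffices to prove that, within a fixed stratum, the locus of MDS codes is Zariski open. Since the Grassmannian is covered by affine spaces $\F^{r+1}$ and the universal family over such a chart is precisely a family of convolutional codes in the sense of Definition~\ref{def:FamilyConvCode}, this reduces to: for a family ${\mathcal C}$ over a parameter space $U\subseteq\F^{r+1}$ with all members of the same type $[n,k,\delta;m]$, the subset $\{\alpha\in U\mid {\mathcal C}_\alpha\text{ is MDS}\}$ is open.

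First I would fix such a family and write $S:=(n-k)(\lfloor\delta/k\rfloor+1)+\delta+1$ for the value of the generalized Singleton bound, which is the same integer for every member of the family because $n,k,\delta$ are constant on $U$ (here I use that $\delta$ is constant on the stratum; in general $\delta$ is only lower semicontinuous, but we have arranged the stratification so that $\delta$ is locally constant). By definition a code ${\mathcal C}_\alpha$ is MDS precisely when $\df({\mathcal C}_\alpha)=S$, and since by~(\ref{eq:SingletonBoundConvo}) one always has $\df({\mathcal C}_\alpha)\leq S$, this is equivalent to $\df({\mathcal C}_\alpha)>S-1$. Therefore
    $$
    \{\alpha\in U\mid {\mathcal C}_\alpha\text{ is MDS}\}
    \,=\,
    \df^{-1}\big((S-1,+\infty)\big)\,,
    $$
which is Zariski open in $U$ by Theorem~\ref{thm:dfreelowersemicont}. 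Pulling this back along the affine charts of the Grassmannian and using that a subset of a variety covered by opens is open iff its intersection with each chart is open, we conclude that the MDS locus is open in each stratum, and hence open in the set of all convolutional codes.

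The step I expect to require the most care is the bookkeeping of the reduction to a fixed stratum: one must check that the universal object over an affine chart of the Grassmannian $Gr(\kappa,\mu)$ appearing in Theorem~\ref{thm:caract} genuinely satisfies the hypotheses of Definition~\ref{def:FamilyConvCode} — i.e. that over a suitable open subset of the chart the cokernel of the universal generator matrix is locally free of rank $n-k$ — and that the fibrewise notion of ``being MDS'' is compatible with the open immersion of the stratum into the Grassmannian. Once the identification of the MDS locus with a level set of $\df$ is in place, the rest is an immediate application of lower semicontinuity, so the genuine content of the corollary is entirely carried by Theorem~\ref{thm:dfreelowersemicont}; the present proof only has to package it correctly.
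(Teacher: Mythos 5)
Your proposal is correct and follows essentially the same route as the paper: reduce via Theorem~\ref{thm:caract} to affine charts of the Grassmannian on which the type (hence the Singleton bound $S$) is constant, identify the MDS locus with $\df^{-1}\big((S-1,+\infty)\big)$, and apply Theorem~\ref{thm:dfreelowersemicont}. The extra bookkeeping you flag (the universal family over a chart satisfying Definition~\ref{def:FamilyConvCode} and constancy of the discrete invariants) is handled in the paper by the same appeal to the Grassmannian classification, so no genuinely different ideas are involved.
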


\begin{proof}
Recall from Theorem~\ref{thm:caract} that the set of convolutional
codes, ${\mathfrak C}$, can be understood as a subset of a
Grassmannian and thus it inherits the Zariski topology. Let
${\mathfrak M}$ denote the subset of ${\mathfrak C}$ consisting of
those points corresponding to MDS convolutional codes. Thus,
${\mathfrak M}$ will be an open subset of ${\mathfrak C}$ if and only if
there is a covering of the Grassmannian by Zariski open subsets
$\{V_i\}$ such that ${\mathfrak M}\cap V_i$ is open in ${\mathfrak
C}\cap V_i$ for all $i$.

Bearing in mind that Grassmannians are covered by affine spaces
or, equivalently, that $V_i$ can be assumed to be isomorphic to
$\F^{r+1}$ for some $r$, it suffices to prove the following
statement: let ${\mathcal C}$ be a family of convolutional codes
over the parameter space $U$ where $U$ is a subset of $\F^{r+1}$;
accordingly, the subset $\{\alpha\in U\vert {\mathcal C}_{\alpha}\text{
is MDS}\}$  is Zariski open in $U$.

Furthermore, since we are dealing with a single Grassmaniann of the
type $Gr(\kappa,\mu)$, and hence connected, it may be assumed
that the degree, the memory, and the sum of the column indices are constant
along the family ${\mathcal C}$.
Theorem~\ref{thm:caract} implies that there is a well defined map:
    $$
    \begin{aligned}
    U & \,\overset{\psi}\longrightarrow\, {\mathfrak C}\,\subset\, Gr(\kappa,\mu)
        \\
    (\alpha_0,\ldots,\alpha_r) & \longmapsto \, {\mathcal C}_\alpha
    \end{aligned}
    $$
and, in particular, that the Singleton bound
(\ref{eq:SingletonBoundConvo}) is the same constant, say $N$, for
each code of the family corresponding to points of $U$.

Now, the subset of $U$ consisting of MDS convolutional codes is
precisely:
\begin{align*}
    \{\alpha\in U\;\vert\; {\mathcal C}_{\alpha} \text{ is MDS}\}
    & \,=\,
    U\cap \psi^{-1}({\mathfrak M}) \\
    & \,=\,
    \df^{-1}\big((N-1, +\infty)\big)\cap U
\end{align*}
($\df:U\to {\mathbb Z}$ being as above), which is Zariski open in
$U$ by  Theorem~\ref{thm:dfreelowersemicont}.
\end{proof}

%

\begin{corollary}\label{cor:boundN}
Let ${\mathcal C}$ be a family of convolutional codes over a parameter space $U$, where $U$ is a non-empty open subset of $\F$.

For a finite extension $\F\hookrightarrow \F'$, let us define:
    $$
    N(\F')\,:=\, \#
    {\small
    \left\{
     \begin{gathered}
     \alpha \in U(\F') \text{ such that}
     \\
     ({\mathcal C}\otimes_{\F}\F')_{\alpha}\text{ is non-MDS}
     \end{gathered}
     \right\} }
    $$
where $U(\F')$ is the set of $\F'$-valued points of $U$ (\S\ref{subsec:Zariski}).

If $N(\F)>0$, then there exists $N \in{\mathbb N}$, depending only on $\mathcal C$, such that:
    $$
    N(\F')\,\leq \, N
    $$
for any finite extension $\F\hookrightarrow \F'$.
\end{corollary}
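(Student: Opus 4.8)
The plan is to translate the non-MDS locus into a genuine Zariski closed subset of the affine line and then invoke the basic fact that a proper closed subset of $\F^{1}$ (over any field) is finite, with a cardinality bound that does not depend on the field.

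First I would set up the algebra. Fix a generator matrix $\mathcal G(z)$ for $\mathcal C$ over $R=\F[\lambda_0]$ (here $r=0$), and let $U=V\setminus Z$ be the parameter space as in Definition~\ref{def:FamilyConvCode}. By Corollary~\ref{cor:MDSopen}, or directly by Theorem~\ref{thm:dfreelowersemicont}, the subset $W:=\{\alpha\in U\mid \mathcal C_\alpha\text{ is non-MDS}\}$ equals $U\cap\df^{-1}\big((-\infty,N-1]\big)$ and hence is Zariski closed \emph{in} $U$; that is, there are polynomials $p_1,\dots,p_s\in\F[\lambda_0]$ with $W=U\cap\{p_1=\dots=p_s=0\}$. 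Likewise $Z=\{q_1=\dots=q_t=0\}$ for suitable $q_j\in\F[\lambda_0]$, and $V$ is the complement of another closed set $Z'=\{h_1=\dots=h_u=0\}$. Therefore the full non-MDS-or-bad locus inside $\F^{1}$, namely $\bar W:=\{p_1=\dots=p_s=0\}\cup Z\cup Z'$, is a closed subset of $\F^{1}$ described by finitely many univariate polynomials whose coefficients lie in $\F$ and which do not depend on any extension.

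Second, the key point is base change. For a finite extension $\F\hookrightarrow\F'$, the crucial observation, already used implicitly in Theorem~\ref{thm:FreeDistBaseChange} and in the discussion after equation~(\ref{eq:dfree}), is that $(\mathcal C\otimes_\F\F')_\alpha = (\mathcal C_\alpha)\otimes_\F\F'$ for $\alpha\in U(\F')$, and that $\df$ is insensitive to the base field; hence $\alpha\in U(\F')$ is non-MDS for the extended family if and only if $\alpha$ is a point of $\bar W(\F')$, i.e. a common zero of the \emph{same} polynomials $p_i,q_j,h_\ell\in\F[\lambda_0]$. Here I would use the translation from the end of \S\ref{subsec:Zariski}: $\bar W(\F')$ is in bijection with the $\F$-algebra maps $\F[\lambda_0]/(p_1,\dots,p_s,q_1,\dots,q_t,h_1,\dots,h_u)\to\F'$. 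Set $f(\lambda_0):=\gcd(p_1,\dots,p_s,q_1,\dots,q_t,h_1,\dots,h_u)\in\F[\lambda_0]$. Since $N(\F)>0$, the locus $\bar W$ is non-empty but, being a proper closed subset of the line (the MDS locus is a non-empty open, as $\psi$ is defined on all of $U$ and $U$ is non-empty), $f$ is a non-zero polynomial. Any $\alpha\in\bar W(\F')$ is a root of $f$ in $\F'$, so $N(\F')\le \#\{\text{roots of }f\text{ in }\F'\}\le \deg f$. Taking $N:=\deg f$, which depends only on $\mathcal C$ (through the chosen $\mathcal G(z)$ and $N$), gives $N(\F')\le N$ for every finite extension, as desired.

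The main obstacle I anticipate is not the finiteness argument itself but making airtight the claim that the non-MDS locus is cut out by polynomials \emph{with coefficients already in $\F$} and that \emph{the same} polynomials govern every extension $\F'$. Concretely, one must check that the defining equations produced by Theorem~\ref{thm:dfreelowersemicont} (equivalently, by the algorithm of Theorem~\ref{th:l(G)}, which expresses $\df(\mathcal C_\alpha)$ via the distance of an explicit linear code built from the coefficient matrices $\mathcal G_i$, whose entries are polynomials in $\lambda_0$ with coefficients in $\F$) are genuinely $\F$-rational, and that vanishing/non-vanishing of minors and the distance computation commute with the base change $-\otimes_\F\F'$. Once that compatibility is in hand, the bound $N(\F')\le\deg f$ is immediate, and one may even note that $N(\F')$ is eventually constant: for all sufficiently large $\F'$ it equals the number of geometric points of $\bar W$ counted without multiplicity, which is at most $\deg f$ and at least $N(\F)>0$.
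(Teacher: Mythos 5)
Your overall shape of argument (the non-MDS locus is closed in a one-dimensional parameter space, and a proper closed subset of a line is finite with a field-independent bound) is close in spirit to the paper's, but the step you yourself single out as ``the main obstacle'' is a genuine gap, and in the form you set it up it cannot be closed. As stated, Theorem~\ref{thm:dfreelowersemicont} concerns subsets of $\F^{r+1}$, i.e.\ $\F$-valued points only: it gives polynomials $p_1,\dots,p_s\in\F[\lambda_0]$ whose common zeros \emph{inside $\F$} are the non-MDS $\F$-points of $U$. Over a finite field this carries no information about extensions: any finite subset $S\subseteq\F$ is the zero set of $\prod_{a\in S}(\lambda_0-a)$, so an admissible choice of the $p_i$ cuts out exactly the non-MDS $\F$-points and nothing more, while the non-MDS locus of $\mathcal{C}\otimes_\F\F'$ may be strictly larger for larger $\F'$. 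Hence your central claim, that $\alpha\in U(\F')$ is non-MDS if and only if it is a common zero of the \emph{same} polynomials, does not follow from what you cite; Theorem~\ref{thm:FreeDistBaseChange} only says that $\df$ of a \emph{fixed} code is unchanged when its alphabet is enlarged, not that the defining equations of the non-MDS locus are stable under base change. The paper's proof is built precisely to sidestep this: it base-changes the family to the algebraic closure $\bar\F$, checks that $\mathcal{C}\otimes_\F\bar\F$ is a family over an open subset $V_1\cap V_2\subseteq\bar\F$ containing $U(\F')$ for every finite extension $\F'$, applies Corollary~\ref{cor:MDSopen} \emph{over $\bar\F$}, and uses that a non-trivial closed subset of the affine line over an algebraically closed field is finite; then $N:=N(\bar\F)$ bounds every $N(\F')$ because MDS-ness of $({\mathcal C}\otimes_\F\F')_\alpha$ agrees with that of $({\mathcal C}\otimes_\F\bar\F)_\alpha$ by Theorem~\ref{thm:FreeDistBaseChange}. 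If you want to keep your gcd/degree formulation, you must produce equations valid over $\bar\F$ (or treat the closed locus functorially), not equations reconstructed from the $\F$-point locus.

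Two smaller slips. Your justification that the MDS locus is non-empty (``$\psi$ is defined on all of $U$ and $U$ is non-empty'') proves nothing: an open subset can be empty, and $N(\F)>0$ is a statement about the non-MDS points; what the finiteness argument actually needs is non-emptiness of the MDS locus over $\bar\F$, and the paper appeals to the hypothesis at exactly this spot, so at minimum you should state that assumption explicitly. Also, your $\bar W$ is a \emph{union} of zero sets while $f=\gcd(p_i,q_j,h_\ell)$ only controls their \emph{common} zeros (e.g.\ $p_1=\lambda_0$, $q_1=\lambda_0-1$ gives $f=1$ although $\bar W\neq\emptyset$), so ``any $\alpha\in\bar W(\F')$ is a root of $f$'' is false as written; since $N(\F')$ only counts points of $U(\F')$, you should drop $Z$ and $Z'$ and bound by the zeros of $\gcd(p_1,\dots,p_s)$ alone, which does work (by B\'ezout in the principal ideal domain $\F[\lambda_0]$) once the rationality issue above has been resolved.
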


\begin{proof}
Let $\bar\F$ denote the algebraic closure of $\F$. Recall that ${\mathcal C}$ is a submodule of $\F[\lambda_0][z]^n$ and thus $({\mathcal C}\otimes_{\F}\bar\F)$ is a submodule of $\bar\F[\lambda_0][z]^n$. For each $\alpha\in\bar \F$, evaluating $\lambda_0$ at $\alpha$, one obtains a convolutional code $({\mathcal C}\otimes_{\F}\bar\F)_{\alpha}$.

Let $V_1$ be the set of $\alpha\in\bar\F$ such that $({\mathcal C}\otimes_{\F}\bar\F)_{\alpha}$ is locally free of rank $k$. Let $V_2$ be the maximal  subset of $\bar\F$ where $\bar\F[\lambda_0][z]^n / ({\mathcal C}\otimes_{\F}\bar\F)$ is locally free. It is easy to check that both $V_1$ and $V_2$ are open; ${\mathcal C}\otimes_{\F}\bar\F$ defines a family of convolutional codes over $V_1\cap V_2$, and $V_1\cap V_2$ contains $U(\F')$ for any extension $\F'$. Corollary~\ref{cor:MDSopen} shows that:
 {\small $$
 W\;:=\;\{\alpha\in  V_1\cap V_2 \;\vert\;
 ({\mathcal C}\otimes_{\F}\bar\F)_{\alpha} \text{ is MDS}
 \}
 $$}
is an open subset of $\bar\F$, which is non-empty by hypothesis.

Since non-trivial closed subsets of $\bar\F$ are finite sets, it follows that $N(\bar\F)$ is finite and coincides with the number of points of the complement of $W$. Bearing in mind that $N(\F')\leq N(\bar\F)$ for any finite extension $\F\hookrightarrow \F'$, the claim is proved.
\end{proof}


\section{Applications to $1$-dimensional MDS CGC}\label{sec:familiesMDSCGC}

This section applies the previous results for the study of MDS convolutional Goppa codes. In particular, it is worth noticing that these computations are carried out on small fields and  consequently that there is no need to consider fields with sufficiently many elements in order to prove that the set of $1$-dimensional MDS CGC is non-empty. Recall that the non-emptiness of the set of MDS convolutional codes was proved in \cite{RS:99} under the assumption that the field was large enough.

%
%

For simplicity, we shall restrict ourselves to the case of
$1$-dimensional codes constructed on the projective line (\cite{DMS:11}). Interested readers can check the general
construction of  higher-dimensional  CGC  on arbitrary curves in \cite{DMS:04, MDIS:06}. Observe that if a $1$-dimensional convolutional code is MDS, then  its column indices are all equal to the degree. Hence, we also assume that $n_i=\delta$ for all $i\leq n$.

Let us briefly recall the construction of CGC. Let $\Pmat^1$ be the projective line over the field $\F(z)$. Fix
 a point at infinity, $p_\infty$, and an affine coordinate $t$
in $\Amat^1=\Pmat^1 \backslash p_\infty$.

Let us consider  $n$ different points $p_i$ with affine coordinates $\{a_iz+b_i\}_{1\leq i\leq n}$ such that $a_i\neq 0$. Let $G=\delta p_{\infty}$ with $0\leq \delta<n$,  $L(G)=\langle 1,t,\dots,t^{\delta}\rangle$ be the associated linear series and let $s(t)=\sum_{i=0}^{\delta}\lambda_i t^i\in L(G)$ be a rational function.

Now, consider $\bar {\mathcal C}$  the  submodule given by the image of the evaluation of $s(t)$ at the points $p_i$:
    \begin{align*}
    \langle s(t)=\lambda_0+\lambda_1t+\cdots+\lambda_{\delta}t^{\delta}
    \rangle
    &
    \rightarrow
    \F[\lambda_0,\ldots,\lambda_{\delta}][z]^n
    \\
    s(t)&\mapsto (s(p_1),\dots, s(p_n))\, ,
    \end{align*}
as a family of submodules parametrized by $\lambda=(\lambda_0,\ldots,\lambda_{\delta})\in {\mathbb P}^{\delta}$. Let us prove that the restrictions of $\bar{\mathcal C}$ and $\F[\lambda_0,\ldots,\lambda_n][z]^n/\bar{\mathcal C}$ to the open subset:
	{\small
	\begin{equation}\label{eq:parameterspace}
	V\,:=\,
	\bigcap_{1\leq i<j\leq \delta} \left\{\begin{gathered} (\lambda_0,\ldots,\lambda_{\delta}) \text{ such that} \\ \sum_{k=0}^{\delta}\lambda_k
	(b_j-b_i)^k(a_i-a_j)^{\delta-k}\neq 0\end{gathered}\right\}
	\end{equation}	}
are locally free.

Note that a matrix associated to the evaluation map above is given by:
    $$
    {\mathcal G}(z)=
    \begin{pmatrix}
        \displaystyle\sum_{i=0}^{\delta}\lambda_i (a_1z+b_1)^i &\dots &\displaystyle\sum_{i=0}^{\delta}\lambda_i (a_n z+b_n)^i
        \end{pmatrix}
        \, ,
    $$
and that $\F[\lambda_0,\ldots,\lambda_n][z]^n/\bar{\mathcal C}$ fails to be locally free whenever these entries have a common root as polynomials in $z$. Let $\alpha$ be a common root, hence $a_i\alpha+b_i$ is a root of $s(t)=\sum_{k=0}^{\delta} \lambda_k t^k$ for all $i$. Since $s(t)$ has degree $\delta$ and $\delta<n$, there exist $i,j$, with $1\leq i<j\leq n$, such that:
	$$
	a_i\alpha+b_i\, =\, a_j\alpha+b_j
	$$
For these indices, it must hold that $a_i\neq a_j$ since, otherwise, this equation implies that $b_i=b_j$ and, hence, $p_i=p_j$ which contradicts the hypothesis. Observe that $s(\frac{b_j-b_i}{a_i-a_j})=s(\alpha)=0$ implies that $\sum_{k=0}^{\delta}\lambda_k(b_j-b_i)^k(a_i-a_j)^{\delta-k}=0$.

Hence, ${\mathcal C}:=\{{\mathcal C}_{\lambda}\}_{\lambda\in V}$ defines a family of  $1$-dimensional length $n$ CGC with parameter space given by~(\ref{eq:parameterspace}), canonical generator matrix equal to ${\mathcal G}(z)$ and degree given by the degree of $s(t)$.  ${\mathcal G}(z)$ admits a polynomial decomposition as follows:
    $$
    {\mathcal G}(z)=G_0+G_1z+\cdots+G_{\delta}z^{\delta}
    $$
with:
    {\small \begin{equation}\label{eq:caract}
    \begin{aligned}
    G_j
    \,=\, &
    \begin{pmatrix}\displaystyle\sum_{r=j}^{\delta}\lambda_r
    \binom{r}{j}a_1^j b_1^{r-j} &\dots
    &\displaystyle\sum_{r=j}^{\delta}\lambda_r \binom{r}{j}a_n^j
    b_n^{r-j}\end{pmatrix}
    \,= \\ = \,&
    \begin{pmatrix} a_1^j s_{\lambda}^{(j)}(b_1)&\dots& a_n^j s_{\lambda}^{(j)}(b_n)\end{pmatrix}
    \end{aligned}
    \end{equation}}
where, for the sake of notation, we have  introduced  the functions $s^{(j)}: \F^{\delta+1}\times\F\to \F$ defined by:
$$
s_{\lambda}^{(j)}(t)=\sum_{r=j}^{\delta}\binom rj {\lambda}_rt^{r-j}\,,\quad 0\leq j\leq\delta\,,\, {\lambda}=({\lambda}_0,\dots,{\lambda}_{\delta})\,.
$$

\subsection{A Family of MDS CGC}

Let us now consider a family of CGC and let us compute the algebraic equations of the subset of the parameter space corresponding to non-MDS codes  explicitly. These results are indeed consequences of \S\ref{subsec:variationdfree}.

Let us consider $n$ pairwise different points
$\{p_i=a_iz+b\}_{1\leq i\leq n}$ such that $a_i\neq0$ and  the
rational function $s(t)=\sum_{i=0}^{\delta} \lambda_i t^i$ (where $\delta<n$). Observe that, in this situation, the open subset defined by (\ref{eq:parameterspace}) is the projective space; i.e. $ V={\mathbb P}^{\delta}$.

In this situation, the family of CGC defined above, $\C=\{\C_\lambda\}_{\lambda\in\Pmat^{\delta}}$, is of type $[n,1,\delta]$ and, furthermore,  the generator matrix, ${\mathcal G}$ is canonical.

Under these assumptions, Theorem 1.11 of \cite{DMS:11} can be
strengthened as follows.

\begin{lemma}\label{lemma:constantGi}
The above-defined code $\C_\lambda$ (i.e. constructed over points $p_i=a_i z+b$ for $a_i$ non-zero and pairwise different)  is MDS if and only if $s_\lambda^{(j)}(b)\neq0$ for $0\leq j\leq\delta$.
\end{lemma}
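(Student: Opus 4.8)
The plan is to prove the two implications separately, using throughout the decomposition ${\mathcal G}(z)=G_0+\dots+G_\delta z^\delta$ together with the explicit shape $G_j=s_{\lambda}^{(j)}(b)\,(a_1^j,\dots,a_n^j)$ furnished by~(\ref{eq:caract}); note that when the stated condition holds, $\lambda_\delta=s_{\lambda}^{(\delta)}(b)\neq0$, so $\C_\lambda$ has degree exactly $\delta$ and generalized Singleton bound $n(\delta+1)$ by~(\ref{eq:singletonk=1}). The first implication is then immediate: if some $s_{\lambda}^{(j)}(b)=0$, the $i$-th component of the codeword ${\mathcal G}(z)$ is $s_\lambda(a_iz+b)=\sum_{j=0}^\delta s_{\lambda}^{(j)}(b)\,a_i^jz^j$, and since $a_i\neq0$ its number of non-zero coefficients equals $\#\{j:s_{\lambda}^{(j)}(b)\neq0\}$, independently of $i$; hence $\operatorname{w}({\mathcal G}(z))=n\cdot\#\{j:s_{\lambda}^{(j)}(b)\neq0\}\le n\delta<n(\delta+1)$, so $\df(\C_\lambda)<n(\delta+1)$ and $\C_\lambda$ is not MDS. (In the notation of Theorem~\ref{th:l(G)}, a Vandermonde argument identifies the condition ``$s_{\lambda}^{(j)}(b)\neq0$ for all $j$'' with the condition that $\nu$ be maximal, so the remaining implication could also be approached through that theorem.)

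For that remaining implication, assume $s_{\lambda}^{(j)}(b)\neq0$ for all $0\le j\le\delta$. Since $\df(\C_\lambda)\le n(\delta+1)$ by the Singleton bound and $\df(\C_\lambda)=\min_{l\ge0}m_l$ by~(\ref{eq:dfree=drl}) and~(\ref{eq:G_lambda=minm_k}), it suffices to prove $m_l\ge n(\delta+1)$ for every $l$. A codeword realizing $m_l$ has the form $p(z){\mathcal G}(z)$ with $p(z)=\sum_{k=0}^l\beta_kz^k$, $\beta_0\neq0$, $\beta_l\neq0$, and — when $l\ge\delta$ — no block $\beta_k=\dots=\beta_{k+\delta}$ of $\delta+1$ consecutive vanishing coefficients. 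Its $i$-th component is $p(z)\bigl(\sum_{j=0}^\delta s_{\lambda}^{(j)}(b)\,a_i^jz^j\bigr)$, a polynomial of degree $l+\delta$ whose coefficient of $z^m$ equals $h_m(a_i)$, where
$$
h_m(x):=\sum_{j=\max(0,\,m-l)}^{\min(\delta,\,m)} s_{\lambda}^{(j)}(b)\,\beta_{m-j}\,x^j,\qquad 0\le m\le l+\delta.
$$
Hence $\operatorname{w}\bigl(p(z){\mathcal G}(z)\bigr)=\sum_{m=0}^{l+\delta}\#\{i:h_m(a_i)\neq0\}=n(l+\delta+1)-\sum_{m=0}^{l+\delta}z_m$, where $z_m:=\#\{i:h_m(a_i)=0\}$.

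The key step is the estimate $\sum_m z_m\le\delta l$. First, under the standing hypotheses $h_m$ is never the zero polynomial: its support in $x$ corresponds, via $j\mapsto m-j$, to the support of $p$ inside the block of indices $[\max(0,m-\delta),\min(l,m)]$, which contains $0$ when $m\le\delta$, contains $l$ when $m\ge l$, and has $\delta+1$ elements when $\delta\le m\le l$, so $\beta_0\neq0$, $\beta_l\neq0$ and the no-block condition force $h_m\not\equiv0$. Writing $h_m(x)=x^{d}\bar h_m(x)$ with $\bar h_m(0)\neq0$, one has $\deg\bar h_m\le\min(\delta,m)-\max(0,m-l)$; since the $a_i$ are non-zero and pairwise distinct, $z_m$ is at most the number of roots of $\bar h_m$, whence $z_m\le\min(\delta,m)-\max(0,m-l)$. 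A short computation (splitting the range of $m$ at $\delta$ and at $l$) then gives $\sum_{m=0}^{l+\delta}z_m\le\delta l$, and therefore
$$
\operatorname{w}\bigl(p(z){\mathcal G}(z)\bigr)\ \ge\ n(l+\delta+1)-\delta l\ =\ n(\delta+1)+l(n-\delta)\ \ge\ n(\delta+1)
$$
because $\delta<n$. Thus $\df(\C_\lambda)=n(\delta+1)$ and $\C_\lambda$ is MDS.

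The main obstacle is precisely the estimate $\sum_m z_m\le\delta l$, and the write-up should make visible the two structural features that drive it. The hypothesis that \emph{all} the $s_{\lambda}^{(j)}(b)$ are non-zero is exactly what keeps the degree span $\deg\bar h_m$ as small as the support of $p$ allows, and what forces every $h_m$ to be non-zero once the parasitic $p$'s carrying a long run of zero coefficients have been discarded — this discarding being the content behind~(\ref{eq:G_lambda=minm_k}). The equality of all the $b_i$, which makes every evaluation point $a_i$ non-zero, is exactly what lets one ignore the possible root of $h_m$ at the origin; and the pairwise distinctness of the $a_i$ then provides the Reed--Solomon/Vandermonde bound $z_m\le\deg\bar h_m$. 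A purely combinatorial computation finishes the proof and incidentally shows that the bound $\sum_m z_m\le\delta l$ is sharp, being attained at the two ends of the range of $m$.
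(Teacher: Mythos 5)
Your proposal is correct, and while the forward implication is essentially the paper's (both exhibit a codeword coming from a degree-zero information word whose weight drops below $n(\delta+1)$ when some $s_\lambda^{(j)}(b)=0$), your converse takes a genuinely different route. The paper checks, via Vandermonde-type minors, that all the stacked block codes ${\tiny\begin{pmatrix} G_u\\ \vdots\\ G_v\end{pmatrix}}$ are MDS and then invokes Theorem 1.11 of \cite{DMS:11} to conclude; you instead bound the weight of every codeword directly, writing the $m$-th coefficient of the $i$-th component as $h_m(a_i)$ and using the fact that $a_1,\dots,a_n$ are distinct and non-zero to bound the number of zero evaluations by $\deg \bar h_m\le\min(\delta,m)-\max(0,m-l)$, whose sum over $m$ is exactly $\delta l$, giving $\operatorname{w}\ge n(\delta+1)+l(n-\delta)$. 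What your approach buys is self-containedness: it replaces the external result of \cite{DMS:11} by the row-distance machinery already developed in \S\ref{subsec:l(C)}, and it makes transparent where each hypothesis enters (all $s_\lambda^{(j)}(b)\neq0$ keeps every $h_m$ non-trivial and its degree span small; $a_i\neq 0$ lets you discard the root at the origin; distinctness gives the root-counting bound). What the paper's route buys is brevity and the stronger structural byproduct that all the stacked matrices generate MDS block codes. One point you should make explicit rather than parenthetical: your use of equations~(\ref{eq:dfree=drl}) and~(\ref{eq:G_lambda=minm_k}) (in particular the discarding of information words containing a run of $\delta+1$ zero coefficients) is only licensed because, under the hypothesis $s_\lambda^{(j)}(b)\neq0$ for all $j$, the matrix ${\tiny\begin{pmatrix} G_\delta\\ \vdots\\ G_0\end{pmatrix}}$ has maximal rank $\delta+1$ (scaled Vandermonde, $n>\delta$) and ${\mathcal G}_\lambda(z)$ is basic (a common root $\alpha$ of the entries would force $s_\lambda^{(0)}(b)=0$ if $\alpha=0$, or give $s$ more than $\delta$ roots if $\alpha\neq 0$); with that verification spelled out, the argument is complete.
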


\begin{proof}
Observe that the code has a generator matrix  $\mathcal{G}(z)=G_0+G_1z+\cdots+G_{\delta}z^{\delta}$, with
$G_i$ as in (\ref{eq:caract}) and $b_i=b$ for all $i=1,\ldots,n$.

Let us assume that ${\mathcal C}_{\lambda}$ is
MDS. Then, the linear block codes $G_j$ are MDS for all $j$. Otherwise,
if one of them, say $G_j$, is not MDS, there exists a degree
0 information word that is encoded by it into a codeword of
weight $<n$. Consequently, such an information word is encoded into a
convolutional codeword of weight strictly lower than $n(\delta+1)$,
contradicting the hypothesis of $\mathcal C$ being MDS (see
equation~(\ref{eq:singletonk=1})).

Note that the linear block code $G_j$ is MDS if and only if
its Hamming distance is $n$; that is, all entries in $G_j$
are non-zero. Recalling the explicit expression of these entries (see (equation~\ref{eq:caract})), one concludes.

Let us prove the converse. Note that the linear codes
 ${\small \begin{pmatrix}  G_u\\
 \vdots\\
 G_v  \end{pmatrix}}$ are MDS since all their maximal order minors have the form
$$
\begin{vmatrix}
    a_i^u&\dots&a_m^u\\ &\ddots&\\a_i^v&\dots&a_m^v
\end{vmatrix}
s_\lambda^{(u)}(b)\cdot\dots\cdot s_\lambda^{(v)}(b)
$$
and are non-zero by the assumptions. By \cite[Theorem 1.11]{DMS:11}, the CGC is of type $[n,1,\delta]$ and MDS, and one concludes.
\end{proof}

\begin{theorem}\label{th:equalb}
Let $\C=\{\C_\lambda\}_{\lambda\in\Pmat^{\delta}}$ be the family
of CGC of type $[n,1,\delta]$ defined above.

Then, the subset of $\Pmat^{\delta}$ consisting of the points
associated with MDS CGC's is a non-empty open subset. More precisely,
we have:
    $$
    \{\lambda\in\Pmat^\delta\,|\, \C_\lambda \text{ is MDS } \}\,=\,
        \{\lambda\in \Pmat^{\delta} \,|\, s_\lambda^{(j)}(b)\neq0\,,
        \forall
        0\leq j\leq\delta\}\,.
    $$

\end{theorem}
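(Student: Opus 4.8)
The plan is to deduce Theorem~\ref{th:equalb} almost immediately from Lemma~\ref{lemma:constantGi} together with the semicontinuity machinery of \S\ref{subsec:variationdfree}. Lemma~\ref{lemma:constantGi} already gives the set-theoretic description: $\C_\lambda$ is MDS precisely when $s_\lambda^{(j)}(b)\neq 0$ for all $0\leq j\leq\delta$. So the only two things left to verify are (i) that this set is Zariski open in $\Pmat^\delta$ and (ii) that it is non-empty.

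For openness, I would first note that each condition $s_\lambda^{(j)}(b)\neq 0$ is the complement of the zero locus of the polynomial $s_\lambda^{(j)}(b)=\sum_{r=j}^{\delta}\binom rj \lambda_r b^{r-j}$, which is linear (hence homogeneous of degree $1$) in the projective coordinates $\lambda=(\lambda_0,\dots,\lambda_\delta)$; therefore each $\{\lambda\in\Pmat^\delta : s_\lambda^{(j)}(b)\neq 0\}$ is a basic Zariski open subset of $\Pmat^\delta$, and the MDS locus, being the finite intersection $\bigcap_{j=0}^{\delta}\{s_\lambda^{(j)}(b)\neq 0\}$, is again open. Alternatively, and more in the spirit of the paper, one can simply invoke Corollary~\ref{cor:MDSopen} (or directly Theorem~\ref{thm:dfreelowersemicont}): since $\C$ is a family of convolutional codes over $\Pmat^\delta$ with constant degree $\delta$ and constant generalized Singleton bound $N=n(\delta+1)$, the MDS locus equals $\df^{-1}\big((N-1,+\infty)\big)$, which is open by lower semicontinuity of $\df$. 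Either argument suffices; I would present the elementary one and remark that it also follows from the general theory.

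For non-emptiness I would argue that the $\delta+1$ hyperplanes $\{s_\lambda^{(j)}(b)=0\}$ cannot cover $\Pmat^\delta$. The cleanest route: over the algebraic closure $\bar\F$ a finite union of proper linear subspaces is a proper closed subset of $\Pmat^\delta$, so its complement is non-empty; to get an $\F$-rational point one notes that each hyperplane contains at most $\frac{q^{\delta+1}-1}{q-1}\cdot\frac{1}{q}$-ish many of the $\frac{q^{\delta+1}-1}{q-1}$ points of $\Pmat^\delta(\F)$, and a crude count shows the union of $\delta+1$ hyperplanes does not exhaust $\Pmat^\delta(\F)$ once $q$ is not too small — but in fact one does not even need a bound, because the point $\lambda=(0,\dots,0,1)$ (i.e. $s_\lambda(t)=t^\delta$) gives $s_\lambda^{(j)}(b)=\binom\delta j b^{\delta-j}$, which is non-zero for every $j$ as soon as $b\neq 0$, and if $b=0$ one instead takes $\lambda=(1,1,\dots,1)$ and checks $s_\lambda^{(j)}(0)=\binom jj\lambda_j=1\neq 0$ wait—more carefully, when $b=0$ one has $s_\lambda^{(j)}(0)=\lambda_j$, so the MDS locus is exactly $\{\lambda_j\neq 0\ \forall j\}$, manifestly non-empty, containing $(1,\dots,1)$. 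I would phrase this as: a point of the MDS locus is obtained by choosing any $s(t)$ of degree $\delta$ all of whose derivatives $s^{(j)}$ are non-zero at $t=b$, which always exists.

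The main obstacle, if any, is purely bookkeeping: making sure the characteristic-$p$ binomial coefficients $\binom rj$ appearing in $s_\lambda^{(j)}$ do not force unexpected vanishing, and confirming that "derivative" here means the Hasse derivative (divided derivative) so that $\binom rj$ is the correct coefficient rather than $r(r-1)\cdots(r-j+1)$, which could vanish mod $p$; this is already implicit in the definition of $s^{(j)}_\lambda$ given before Lemma~\ref{lemma:constantGi}, so the exhibited witness $s(t)=t^\delta$ (for $b\neq 0$), respectively the witness with all $\lambda_j\neq 0$ (for $b=0$), is valid. Everything else is a direct transcription of Lemma~\ref{lemma:constantGi} and the observation that the defining conditions are linear, hence their non-vanishing loci are open and their intersection is a non-empty open subset of $\Pmat^\delta$.
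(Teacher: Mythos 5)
Your reduction to Lemma~\ref{lemma:constantGi} and your openness argument (each condition $s_\lambda^{(j)}(b)\neq 0$ is the non-vanishing of a linear form in $\lambda$, since the coefficient of $\lambda_j$ is $\binom{j}{j}=1$, so the MDS locus is a finite intersection of basic open sets) are fine and match the paper's implicit treatment. The gap is in the non-emptiness step, which is the only substantive point of the theorem. Your exhibited witness $\lambda=(0,\dots,0,1)$, i.e.\ $s(t)=t^\delta$, gives $s_\lambda^{(j)}(b)=\binom{\delta}{j}b^{\delta-j}$, and the binomial coefficient $\binom{\delta}{j}$ can vanish in characteristic $p$ even though these are Hasse-type coefficients: e.g.\ in characteristic $2$ with $\delta=2$ one has $\binom{2}{1}=0$, so $s^{(1)}_\lambda(b)=0$ and the witness lands in the non-MDS locus. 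This is exactly the situation of the paper's Example over $\F_8$ (conditions $\lambda_0+\lambda_1\alpha+\lambda_2\alpha^2\neq0$ and $\lambda_1\neq0$): the polynomial $t^2$ has $\lambda_1=0$, hence is not MDS there. So the claim ``non-zero for every $j$ as soon as $b\neq0$'' is false, and your bookkeeping remark does not repair it, because the issue is not the factor $j!$ but $\binom{\delta}{j}\equiv 0 \pmod p$. Your fallback routes also do not close the gap as written: the algebraic-closure argument only produces a $\bar\F$-point, while the theorem needs an $\F$-rational $\lambda$ (the whole point of the paper is small fields), and the hyperplane count requires a bound relating $q$ and $\delta$ that you explicitly waived (it does hold here, since $\delta<n\leq q-1$, but you would have to say so and carry out the count).

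The paper's proof avoids all of this with one observation you could adopt verbatim: the map $\psi:\F^{\delta+1}\to\F^{\delta+1}$, $\lambda\mapsto\big(s_\lambda^{(0)}(b),\dots,s_\lambda^{(\delta)}(b)\big)$, is $\F$-linear with unitriangular matrix (diagonal entries $\binom{j}{j}=1$), hence an isomorphism over $\F$ for any $b$, any $q$ and any characteristic; the MDS locus is $\psi^{-1}\big(\{x\,\vert\,x_i\neq0\ \forall i\}\big)$, and the $\F$-point $\psi^{-1}(1,\dots,1)$ lies in it. Replace your explicit witness (valid only when all $\binom{\delta}{j}$ are units, or in the case $b=0$, where your argument is correct) by this triangularity argument and the proof is complete.
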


\begin{proof}
By  the previous Lemma, it only remains to prove that the set in the statement is non-empty. This follows easily because this set can be identified with:
$$
\psi^{-1}\Big(\big\{x=(x_0,\dots,x_{\delta})\in\F^{\delta+1} \,|\,
x_i\neq0, \text{ for $0\leq i\leq\delta$}\big\}\Big)
$$
where $\psi$ is the $\F$-linear map:
\begin{align*}
\F^{\delta+1}&\xrightarrow{\psi}\F^{\delta+1}\\
\lambda&\mapsto(s_\lambda^{(0)}(b),s_\lambda^{(1)}(b),\dots,s_\lambda^{(\delta)}(b))
\, ,
\end{align*}
which is the isomorphism whose associated matrix is ${\tiny\begin{pmatrix}
    1&*&\dots&*\\
    0&1&\dots&*\\
    \vdots &\ddots &\ddots&\vdots\\
    0& \dots &0 &1
\end{pmatrix}}$.
\end{proof}


\begin{remark}
If $b=0$, the equations of the non-MDS codes are $\lambda_i=0$ for
$0\leq i\leq\delta$. Thus, the code corresponding to $\lambda=(1,\dots,1)\in\Pmat^\delta $ is MDS.
If, moreover,  $a_j=a^{j-1}$ where $a$ is an element of $\F$ with
$\ord(a)\geq n$, we obtain the class of $1$-dimensional MDS
convolutional codes of type $[n,1,\delta]$ with generator matrix $\,
{\mathcal G}(z)=\sum_{i=0}^{\delta}z^i\begin{pmatrix} 1&a^i&\dots
&a^{(n-1)i}\end{pmatrix}$, constructed  by
Gluesing and Langfeld \cite{GL:06}.
\end{remark}


\begin{example}
Over the field extension $\F_4\simeq \F_2[\alpha]$ with $\alpha$ a
root of $x^2+x+1$ let us fix pairwise different points $p_1=z+\alpha, p_2=\alpha
z+\alpha,p_3=\alpha^2 z+\alpha\in\Pmat^1$ and let
$s_{\lambda}(t)=1+\alpha t+t^2$, where $\lambda=(1,\alpha,1)$. We
obtain the CGC of type $[n=3,1,\delta=2]$ generated by the canonical generator matrix
$$
{\mathcal G}(z)= \begin{pmatrix}
   z^2+\alpha z + 1 & \alpha^2z^2+\alpha^2 z + 1 & \alpha z^2+z + 1
\end{pmatrix}
\, .
$$

We have that $s^{(0)}_\lambda(\alpha)=1$,
$s^{(1)}_\lambda(\alpha)=\alpha$, $s^{(2)}_\lambda(\alpha)=1$.
Hence, from Theorem~\ref{th:equalb} the resulting code is MDS.

\end{example}
\begin{example}
Let us consider the projective line over $\F_5$, and let us fix
the points $p_1=z+1$, $p_2=2z+1$, $p_3=3z+1$,
$p_4=4z+1$, and $G=3p_\infty$. Any CGC of type $[n=4,1,\delta=3]$ with $s(t)=\lambda_0+\lambda_1t+
\lambda_2t^2+\lambda_3t^3 \in L(G)$ is generated by a
matrix ${\mathcal G}(z)=G_0+zG_1+z^2G_2+z^3G_3$, where $G_i$ are the matrices given by:
\begin{eqnarray*}
    & & G_0= (\lambda_0+\lambda_1  + \lambda_2 + \lambda_3) \begin{pmatrix}1&1&1&1\end{pmatrix}\\
    & & G_1= (\lambda_1+2\lambda_2+3\lambda_3)  \begin{pmatrix}1&2&3&4\end{pmatrix}\\
    & & G_2= (\lambda_2+3\lambda_3)
        \begin{pmatrix}1&4&4&1\end{pmatrix}\\
    & & G_3= \lambda_3 \begin{pmatrix}1&3&2&4\end{pmatrix}
    \, .
\end{eqnarray*}

According to Theorem \ref{th:equalb}, the necessary and sufficient
conditions for the above CGC to be MDS are
$$ 
\begin{array}{l}
    \lambda_0+\lambda_1+ \lambda_2 + \lambda_3 \neq 0\\
    \lambda_1+2\lambda_2+3\lambda_3\neq 0\\
    \lambda_2+3\lambda_3\neq 0\\
    \lambda_3 \neq 0
\end{array}
$$ 

\end{example}

\begin{example}\label{ex:7points}
Let us now consider the projective line over $\F_8\simeq
\F_2[\alpha]$ with $\alpha$ a root of $x^3+x+1$, and let us fix points
$\{p_i=a_iz+b\}_{1\leq i\leq 7}$, as well as the divisor
$G=2p_\infty$ and the rational function  $s(t)=
\lambda_0+\lambda_1t+ \lambda_2t^2\in L(G)$ with $\lambda_2\neq 0$.

Thus, by Theorem \ref{th:equalb}, the above CGC, which is of type $[n=7,1,\delta=2]$, is MDS if and only if
$$ 
\begin{array}{l}
    \lambda_0+\lambda_1 b+ \lambda_2 b^2\neq 0\\
    \lambda_1\neq 0
    \, .
\end{array}
$$ 

For instance, for $p_i=\alpha^{i-1}
z+\alpha$, $i=1,\ldots,7$,  the code, which is a CGC of type $[n=7,1,\delta=2]$, is MDS if and only if
\begin{equation}\label{eq:example418}
\begin{array}{l}
    \lambda_0+\lambda_1 \alpha+ \lambda_2 \alpha^2\neq 0\\
    \lambda_1\neq 0
\end{array}
\end{equation}

In particular, the CGC generated by $G_0+zG_1+z^2G_2$
where:
\begin{eqnarray*}
    & & G_0= \begin{pmatrix}1&1&1&1&1&1&1\end{pmatrix}\\
    & & G_1= \begin{pmatrix}\alpha^2&\alpha^3&\alpha^4&\alpha^5&\alpha^6&1&\alpha\end{pmatrix}\\
    & & G_2= \begin{pmatrix}\alpha^2&\alpha^4&\alpha^6&\alpha&\alpha^3&\alpha^5&1\end{pmatrix}
\end{eqnarray*}
is MDS, since it is the code associated with
$\lambda_0=\lambda_1=\lambda_2=\alpha^2$.
\end{example}

\subsection{Increasing the length of an MDS CGC}\label{subsec:increasinglength}

This subsection offers a constructive method for increasing the length of a given CGC. Using the results of \S\ref{subsec:l(C)} and \S\ref{subsec:enlargealphabet}, we show that it is possible to increase the length of the codes, preserving the property of being MDS.

More precisely,  given a CGC of type $[n,1,2]$ with canonical generator matrix:
     $$
    {\mathcal G}(z) \, := \, \big({\mathcal G}_1(z), \ldots, {\mathcal
    G}_n(z)\big)
    $$
we shall add another point to the divisor $p_1+\dots+p_n$ in order to obtain a CGC of type $[n+1,1,2]$.

Let $G=2p_{\infty}$, $p_i$ be the point
with affine coordinate $a_iz+b_i$ for $i=1,\ldots,n$ where we assume
that $a_i\neq 0$ and $\delta=2<n$. Let $s(t)$ be the rational
function  $\lambda_0+\lambda_1t +
\lambda_2t^2\in L(G)$. Let $p_{n+1}=a z+b$ be the new point we wish to add. Let ${\mathcal
F}(z)$ be the polynomial obtained by evaluating $s$ at $p_{n+1}$;
that is:
    \begin{equation}\label{eq:F(z)=s(p)}
    \begin{aligned}
    {\mathcal F}(z)\, & = \, F_0+F_1z+F_2z^2 \, = \,  s(p_{n+1})
    \, = \\ & = \,
    \sum_{j=0}^2 \lambda_j (a z+b)^j\, = \, \sum_{j=0}^2 s_{\lambda}^{(j)}(b)a^j z^j
    \end{aligned}
    \, .
    \end{equation}
We now consider $\widetilde{\mathcal C}$, a convolutional code of
dimension $1$ and length $n+1$, defined by the following generator
matrix:
    $$
    \widetilde{\mathcal G}(z) \, := \, \big({\mathcal G}_1(z), \ldots, {\mathcal
    G}_n(z), {\mathcal F}(z)\big)
    \, .
    $$

Let us introduce ${\mathcal F}_k$ as the $(k+1)\times (k+3)$-matrix
given by:
    \begin{equation}\label{eq:Fk=slidding}
    {\mathcal F}_k\,=\,
    \begin{pmatrix}
        F_{0} & F_{1} & F_{2} & 0 & \dots &  0 \\
        0 & F_{0} & F_{1} & F_{2} & \ddots  &  \vdots
        \\
        \vdots &  \ddots & \ddots & & \ddots  &  0 \\
        0 & \dots & 0 & F_{0} & F_{1} & F_{2}
        \end{pmatrix}
        \, .
    \end{equation}
If no confusion arises and ${\mathcal F}_k$ is of maximal rank, we shall denote by ${\mathcal F}_k$ the linear code generated by the image
of ${\mathcal F}_k:\F^{k+1}\to \F^{k+3}$, where we are identifying
$\F^{k+1}\simeq \langle 1,\ldots, z^k \rangle$ and $\F^{k+3}\simeq \langle 1,\ldots,
z^{k+2} \rangle$. With these notations, we have the following:

\begin{lemma}\label{lem:dtildeGgeqdG+dF}
Let $k_0:=\operatorname{max}\{\operatorname{l}(\widetilde{\mathcal C}) , \operatorname{l}({\mathcal C})\}$, where $\operatorname{l}({\mathcal C})$ is given by Equation \eqref{eq:lambdaG=integerpart}. If ${\mathcal F}_{k_0}$ is
of maximal rank, it holds that:
    $$
    \df(\widetilde{\mathcal C}) \,\geq\,  \df( {\mathcal C}) \;+ \;
        \operatorname{d} ({\mathcal F}_k) \qquad \forall k\geq k_0
    $$
\end{lemma}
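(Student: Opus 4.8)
The goal is to bound the free distance of $\widetilde{\mathcal C}$ from below by $\df(\mathcal C) + \operatorname{d}(\mathcal F_k)$. The strategy is to use the formula $\df(\widetilde{\mathcal C}) = d^r_{\operatorname{l}(\widetilde{\mathcal C})}$ provided by Theorem~\ref{th:l(G)}, so that computing the free distance reduces to examining the weight of a single codeword of bounded degree, and then to decompose the weight of any codeword of $\widetilde{\mathcal C}$ into the contribution coming from the first $n$ coordinates (which lie in $\mathcal C$) and the contribution of the last coordinate (which lies in the linear code $\mathcal F_k$). First I would verify that Theorem~\ref{th:l(G)} applies to both $\mathcal C$ and $\widetilde{\mathcal C}$: for $\mathcal C$ this is precisely the hypothesis that the code is of type $[n,1,2]$ with canonical generator matrix, and one checks $k(\delta+1)=3=\nu$ using that $G_0,G_1,G_2$ are each nonzero scalar multiples of Vandermonde-type rows, so the associated $C^0$ has full rank $3$; for $\widetilde{\mathcal C}$ the same check works once ${\mathcal F}_{k_0}$ is assumed to have maximal rank, which guarantees the relevant stacked matrix $\begin{pmatrix} \tilde G_2\\ \tilde G_1 \\ \tilde G_0\end{pmatrix}$ still has rank $3$.

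\textbf{The core argument.} Fix $k\geq k_0$. By Theorem~\ref{th:l(G)} we have $\df(\widetilde{\mathcal C}) = d^r_{\operatorname{l}(\widetilde{\mathcal C})}$, and since $k_0\geq \operatorname{l}(\widetilde{\mathcal C})$ and the row distances are non-increasing, $d^r_{k}\leq d^r_{\operatorname{l}(\widetilde{\mathcal C})}$ would go the wrong way; instead I would observe that the monotonicity and~(\ref{eq:dfree=drl}) give $\df(\widetilde{\mathcal C})=\min_l d^r_l = d^r_k$ for all $k\geq \operatorname{l}(\widetilde{\mathcal C})$, and similarly $\df(\mathcal C)=d^r_k(\mathcal C)$ for $k\geq\operatorname{l}(\mathcal C)$; hence it suffices to work at a common stage $k\geq k_0$. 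Now take a minimal-weight codeword of the linear code $\widetilde C_k$ (in the sense of~(\ref{eq:Gk=slidding}) applied to $\widetilde{\mathcal G}$), say coming from an information vector $\alpha(z)\in\F[z]$ of degree $\leq k$. Its image under $\widetilde{\mathcal G}(z)$ is $\big(\alpha(z){\mathcal G}_1(z),\ldots,\alpha(z){\mathcal G}_n(z),\alpha(z){\mathcal F}(z)\big)$. The weight splits as
$$
\operatorname{w}\big(\alpha(z)\widetilde{\mathcal G}(z)\big) = \operatorname{w}\big(\alpha(z){\mathcal G}(z)\big) + \operatorname{w}\big(\alpha(z){\mathcal F}(z)\big).
$$
If $\alpha(z)\neq 0$, the first summand is the weight of a nonzero codeword of the linear code $C_k$, hence $\geq d^r_k(\mathcal C) = \df(\mathcal C)$; and $\alpha(z){\mathcal F}(z)$ is, after the identifications $\F^{k+1}\simeq\langle 1,\ldots,z^k\rangle$ and $\F^{k+3}\simeq\langle 1,\ldots,z^{k+2}\rangle$, a nonzero element of the linear code $\mathcal F_k$, so its weight is $\geq \operatorname{d}(\mathcal F_k)$. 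Adding these gives the claimed bound for this codeword, and since $\df(\widetilde{\mathcal C})=d^r_k$ is realized by such a minimal-weight codeword, the inequality follows.

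\textbf{Loose ends and the main obstacle.} Two points need care. First, the decomposition of the weight as a sum requires that the vanishing pattern in the first $n$ coordinates be unrelated to that in the last coordinate, which is automatic because weight is computed coordinate-by-coordinate; no cancellation issue arises here. Second, and this is the genuine subtlety, one must ensure that $\alpha(z)\mathcal F(z)$ really is a codeword of $\mathcal F_k$ and in particular nonzero whenever $\alpha(z)\neq 0$: this uses $\mathcal F(z)\neq 0$ (equivalently $s_\lambda^{(0)}(b), s_\lambda^{(1)}(b), s_\lambda^{(2)}(b)$ not all zero, cf.~(\ref{eq:F(z)=s(p)})), together with the degree bound $\deg(\alpha)\leq k$ so that $\alpha(z)\mathcal F(z)$ has degree $\leq k+2$ and indeed lies in the span $\langle 1,\ldots,z^{k+2}\rangle$. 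The maximal-rank hypothesis on $\mathcal F_{k_0}$ is exactly what is needed both to apply Theorem~\ref{th:l(G)} to $\widetilde{\mathcal C}$ and to make sense of $\mathcal F_k$ as a linear code of the stated length; I expect the main obstacle to be bookkeeping the index shifts correctly (the sliding-block matrix for $\widetilde{\mathcal G}$ at stage $l$ versus the matrices~(\ref{eq:Gk=slidding}) and~(\ref{eq:Fk=slidding})) and confirming that $\operatorname{l}(\widetilde{\mathcal C})$ computed from the type $[n+1,1,2]$ via~(\ref{eq:lambdaG=integerpart}) is $\leq k_0$, so that the free distance of $\widetilde{\mathcal C}$ is genuinely attained at stage $k\geq k_0$.
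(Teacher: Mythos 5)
Your proposal is correct and takes essentially the same route as the paper: both decompose $\operatorname{w}\big(\alpha(z)\widetilde{\mathcal G}(z)\big)$ as $\operatorname{w}\big(\alpha(z){\mathcal G}(z)\big)+\operatorname{w}\big((\alpha_0,\dots,\alpha_k){\mathcal F}_k\big)$, bound the two terms by $\df({\mathcal C})$ and $\operatorname{d}({\mathcal F}_k)$ respectively, and invoke Theorem~\ref{th:l(G)} to confine the computation of $\df(\widetilde{\mathcal C})$ to stages $k\geq k_0$. One small correction: the maximal-rank hypothesis on ${\mathcal F}_{k_0}$ serves to make the ${\mathcal F}$-component nonzero for $\alpha\neq 0$ (equivalently ${\mathcal F}(z)\neq 0$), not to give the stacked matrix $\begin{pmatrix}\widetilde G_2\\ \widetilde G_1\\ \widetilde G_0\end{pmatrix}$ rank $3$ -- that rank condition comes from the corresponding matrix for ${\mathcal C}$, and the paper's own proof leaves this check implicit just as you do.
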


\begin{proof}
Let us introduce matrices $G_i$ and $\widetilde G_i$. For $i=0,1,2$ we use the
polynomial decompositions of ${\mathcal G}(z)$ and $\widetilde{\mathcal C}$ as defining relations; that is:
    $$
    \begin{gathered}
    {\mathcal G}(z)=G_0+G_1z+G_2 z^2
    \\
    \widetilde{\mathcal G}(z)\,=\,
    \widetilde G_0+ \widetilde  G_1z+\widetilde G_2 z^2
    \end{gathered}
    \, ,
    $$
while we set $G_i=F_i=0$ for all $i<0$ and $i>2$.
Observe that $\widetilde G_i$ is precisely the juxtaposition of the
matrices $G_i$ and $F_i$, which we denote by $(G_i\vert F_i)$.

A degree $k$ information word, $\alpha(z)=\alpha_0+\alpha_1
z+\ldots+\alpha_k z^k$, is encoded as:
    {\small $$
    \alpha(z) \widetilde{\mathcal G}(z)
    \,=\,
    \sum_{j=0}^{k+2}
    \begin{pmatrix}
    \alpha_0 & \dots & \alpha_k
    \end{pmatrix}
    \begin{pmatrix}
    G_{j} & \vert & F_j \\
    G_{j-1} & \vert & F_{j-1} \\
    \vdots & & \vdots \\
    G_{j-k} & \vert & F_{j-k}
    \end{pmatrix}
   \cdot z^j
    $$}
or, equivalently, the juxtaposition of the polynomial words $\alpha(z){\mathcal G}(z)$ and:
    {\small
    $$ 
    \sum_{j=0}^{k+2}
    \begin{pmatrix}
    \alpha_0 & \dots & \alpha_k
    \end{pmatrix}
    \begin{pmatrix}
    F_j \\
    F_{j-1} \\
    \vdots  \\
    F_{j-k}
    \end{pmatrix}
    \cdot z^j
    \,=
    \,
    \begin{pmatrix}
    \alpha_0 & \dots & \alpha_k
    \end{pmatrix}
    {\mathcal F}_k
    \begin{pmatrix}
    z^0 \\
    z^1  \\
    \vdots  \\
    z^{k+2}
    \end{pmatrix}
    \, ,
    $$ 
    }
where ${\mathcal F}_k$ is the matrix given in equation~(\ref{eq:Fk=slidding}). Thus, the weight of $\alpha(z) \widetilde{\mathcal G}(z)$ can be computed as follows:
    $$
    \operatorname{w}(\alpha(z) \widetilde{\mathcal G}(z)) \,=\,
    \operatorname{w}((\alpha(z) {\mathcal G}(z))
    \;+\;
    \operatorname{w}(
        (\alpha_0 , \dots , \alpha_k) \cdot
        {\mathcal F}_k
        )
    $$

Considering the distances as lower bounds of the terms on the right
hand side and bearing in mind Theorem~\ref{th:l(G)},  we obtain:
    $$
    \operatorname{w}(\alpha(z) \widetilde{\mathcal G}(z))
    \,\geq \,
    \df ({\mathcal C})
    \;+\;
    \operatorname{min}\{ \operatorname{d}({\mathcal F}_k) \vert k\geq 0\}
    $$
for all $\alpha\neq 0$ and $ k\gg  0$.  Noting that $\{
\operatorname{d}({\mathcal F}_k) \vert k\geq 0\}$ is a non-increasing
sequence and allowing $k$ to be larger than or equal to $k_0$, the
statement follows.
\end{proof}

\begin{example}
Let us fix $\F_8\simeq \F_2[\alpha]$, where $\alpha$ is a root of $x^3+x+1$, as the
base field.

Let us consider the  CGC corresponding to the point
$\lambda =(1,1,1)\in \Pmat^2$, i.e the rational function $s(t)=1+t+t^2$, and
the points $p_1=z+\alpha$, $p_2=\alpha z+\alpha$, $p_3= \alpha^2 z
+\alpha$.

We thus have a CGC of type $[n=3,1,\delta=2]$ generated by:
$$
{\mathcal G}(z)=\left(
\begin{array}{ccc}
 z^2+z+\alpha^5 &
 \alpha^2 z^2+\alpha z +\alpha^5&
 \alpha^4 z^2+\alpha^2 z +\alpha^5
\end{array}
\right)
\, ,
$$
which has free distance 9 and is therefore MDS.

Let us now consider $\tilde{\mathcal C}$, the CGC  constructed from
${\mathcal G}(z)$ and the point  $p_4=\alpha^2 z +\alpha^4$ by the above procedure; that is, the code generated by:
$$
\tilde{\mathcal{G}}(z) = \left(
\begin{array}{c}
 z^2+z+\alpha^5 \\
 \alpha^2 z^2+\alpha z +\alpha^5 \\
 \alpha^4 z^2+\alpha^2 z +\alpha^5 \\
 \alpha^4 z^2+\alpha^2 z+\alpha^6
\end{array}
\right)^T
$$
We have that
$s(p_4)=\alpha^4 z^2+\alpha^2 z+\alpha^6$, i. e., $F_0=\alpha^6,
F_1=\alpha^2, F_2=\alpha^4$. Bearing in mind  equation~(\ref{eq:lambdaG=integerpart}) one has that $\operatorname{l}({\mathcal C})=6$ and
$\operatorname{l}(\widetilde{\mathcal C})=3$ and, also one easily checks that $d(\mathcal{F}_6)\geq 3$. Therefore,  the Lemma implies that $\widetilde{\mathcal C}$ is MDS and has free distance 12.
\end{example}

We are now ready to offer a way for increasing the length of a CGC.

Let us introduce the following notation. Let $h_k$ be the $k$-th complete symmetric function on the roots of ${\mathcal F}(z)=0$.

\begin{theorem}
With the previous notations, let $\mathcal C$ be an MDS CGC of type $[n,1,2]$.

Assume that  $\operatorname{d}{\tiny \begin{pmatrix} G_{2} \\  G_{1}  \\  G_{0} \end{pmatrix}}\geq 3$; $a\neq 0$; $s_{\lambda}^{(0)}(b)\neq 0$; $\lambda_2\neq 0$; and, $h_k\neq 0$ for all  $k\leq \operatorname{l}({\mathcal C})$.

It then holds that: i) $\widetilde{\mathcal C}$ is an MDS CGC of type $[n+1,1,\delta=2]$; ii)
    $\operatorname{d}{\tiny
            \begin{pmatrix}
            G_{2} & \vert & F_2 \\
            G_{1} & \vert & F_{1} \\
            G_{0} & \vert & F_{0}
            \end{pmatrix}}
            \geq 3$; and iii) $\operatorname{l}(\widetilde{\mathcal C})\leq
\operatorname{l}({\mathcal C})+1$.
\end{theorem}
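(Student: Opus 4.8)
The strategy is to verify the three hypotheses needed to invoke Lemma~\ref{lem:dtildeGgeqdG+dF} and Theorem~\ref{th:equalb}, and then read off all three conclusions. The key translation is the formula $F_j = s_{\lambda}^{(j)}(b)\,a^j$ from equation~(\ref{eq:F(z)=s(p)}): since $a\neq 0$, the vanishing of $F_j$ is equivalent to the vanishing of $s_{\lambda}^{(j)}(b)$. The hypotheses $s_{\lambda}^{(0)}(b)\neq 0$ and $\lambda_2 = s_\lambda^{(2)}(b)\neq 0$ give $F_0\neq 0$ and $F_2\neq 0$; and since $\mathcal C$ itself is MDS, Lemma~\ref{lemma:constantGi} (applied to the points of $\mathcal C$) does not literally apply because the $b_i$ need not be equal, but in any case we do not need $F_1\neq 0$. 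What we really need is that the matrix $\mathcal F_{k_0}$ has maximal rank, where $k_0 = \max\{\operatorname{l}(\widetilde{\mathcal C}),\operatorname{l}(\mathcal C)\}$. Because $F_0$ and $F_2$ are both non-zero, the banded Toeplitz matrix $\mathcal F_k$ in~(\ref{eq:Fk=slidding}) always has maximal rank: reducing modulo the leading term one sees the rows are independent (equivalently, $F_0\ne 0$ ensures the first $k+1$ columns already contain an invertible triangular block). So $\mathcal F_{k_0}$ is of maximal rank as required by Lemma~\ref{lem:dtildeGgeqdG+dF}.

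Next I would bound $\operatorname{d}(\mathcal F_k)$ from below. A codeword of the linear code $\mathcal F_k$ has the form $(\alpha_0,\dots,\alpha_k)\mathcal F_k$, whose coordinates are the coefficients of the polynomial $\alpha(z)\mathcal F(z)$, a product of a degree-$k$ polynomial with the degree-$2$ polynomial $\mathcal F(z)$. Writing $\mathcal F(z) = F_2(z-r_1)(z-r_2)$ with $r_1,r_2$ the roots, the coefficients of $\alpha(z)\mathcal F(z)$ involve the complete symmetric functions $h_k$ of the roots. The role of the hypothesis ``$h_k\neq 0$ for all $k\le \operatorname{l}(\mathcal C)$'' is precisely to force at least three non-zero coefficients in any non-zero such product up to the relevant stage, so that $\operatorname{d}(\mathcal F_k)\geq 3$ for all $k\leq \operatorname{l}(\mathcal C)$, and hence for $k\geq k_0$ by monotonicity of the sequence $\{\operatorname{d}(\mathcal F_k)\}$ combined with the fact that $\operatorname{l}(\widetilde{\mathcal C})$ will turn out to be $\leq \operatorname{l}(\mathcal C)+1$ (proved in step iii, or bounded a priori). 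With $\operatorname{d}(\mathcal F_k)\geq 3$ and $\df(\mathcal C) = n(\delta+1) = 3n$ (as $\mathcal C$ is MDS of type $[n,1,2]$), Lemma~\ref{lem:dtildeGgeqdG+dF} gives $\df(\widetilde{\mathcal C})\geq 3n+3 = (n+1)(\delta+1)$, which by the Singleton bound~(\ref{eq:singletonk=1}) is an equality; this proves i).

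For ii), observe that the matrix in question is $\widetilde G_i = (G_i\mid F_i)$ stacked, i.e. the juxtaposition of the stacked $G_i$'s with the column vector $(F_2,F_1,F_0)^{T}$. Its associated linear block code contains that of $\begin{pmatrix}G_2\\ G_1\\ G_0\end{pmatrix}$ as a ``puncturing'' in the last coordinate, so its distance is at least $\operatorname{d}\begin{pmatrix}G_2\\ G_1\\ G_0\end{pmatrix}\geq 3$ minus nothing — more carefully, appending a coordinate can only keep the distance the same or increase it, so ii) follows immediately from the hypothesis $\operatorname{d}\begin{pmatrix}G_2\\ G_1\\ G_0\end{pmatrix}\geq 3$. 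Then iii) is a direct computation from~(\ref{eq:lambdaG=integerpart}): both $\mathcal C$ and $\widetilde{\mathcal C}$ have $\delta = 2$ and $k = 1$, and by ii) the relevant $\mu$ for $\widetilde{\mathcal C}$ (the distance of $C^0$, which is exactly $\operatorname{d}\begin{pmatrix}\widetilde G_2\\ \widetilde G_1\\ \widetilde G_0\end{pmatrix}\geq 3$) is at least as large as the corresponding $\mu$ for $\mathcal C$ (which is $3$, since $\mathcal C$ MDS forces all the relevant distances to be $3$); substituting $n\mapsto n+1$, $\mu\mapsto 3$ in~(\ref{eq:lambdaG=integerpart}) and comparing floors yields $\operatorname{l}(\widetilde{\mathcal C})\leq \operatorname{l}(\mathcal C)+1$.

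The main obstacle I anticipate is the combinatorial heart of step two: pinning down exactly why ``$h_k\neq 0$ for $k\leq \operatorname{l}(\mathcal C)$'' suffices to guarantee $\operatorname{d}(\mathcal F_k)\geq 3$ at the relevant stages — this requires a careful analysis of when a product $\alpha(z)\mathcal F(z)$ of a polynomial with a fixed quadratic can have only one or two non-zero coefficients, expressed through the symmetric functions of the two roots of $\mathcal F$. One must also be slightly careful with the logical ordering, since the bound $\operatorname{l}(\widetilde{\mathcal C})\leq\operatorname{l}(\mathcal C)+1$ from iii) is what lets us apply ``$h_k\neq 0$ for $k\leq\operatorname{l}(\mathcal C)$'' to reach $k_0$; this can be arranged by first proving ii) and iii) (which do not depend on the free-distance computation), and only then running the argument for i).
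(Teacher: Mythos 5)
Your overall route is the same as the paper's (prove ii) by the column-appending argument, deduce iii) from the formula~(\ref{eq:lambdaG=integerpart}), then get i) from Lemma~\ref{lem:dtildeGgeqdG+dF} with $\df(\mathcal C)=3n$ and $\operatorname{d}(\mathcal F_k)\geq 3$), and your verification that $\mathcal F_{k_0}$ has maximal rank and your proof of ii) are fine. But the computational heart of part i) is exactly the step you defer: you never show that the hypotheses $s_\lambda^{(0)}(b)\neq 0$, $\lambda_2\neq 0$, $h_k\neq 0$ actually force $\operatorname{d}(\mathcal F_k)\geq 3$ at the required stage. The paper does this by computing \emph{all} maximal minors of the banded Toeplitz matrix $\mathcal F_k$: each factors as $F_0^{\,i-1}\, d_{j-i-1}\, F_2^{\,k+2-j}$, where the central determinant $d_m$ obeys $d_m=F_1d_{m-1}-F_0F_2d_{m-2}$ and hence equals $(-a^2\lambda_2)^{m+1}h_{m+1}$; with $F_0=s_\lambda^{(0)}(b)\neq0$ and $F_2=a^2\lambda_2\neq0$ this makes $\mathcal F_k$ an MDS block code, of distance exactly $3$. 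Without this identity the hypothesis on the $h_k$ never enters your argument. Moreover, your appeal to monotonicity runs the wrong way: the sequence $\{\operatorname{d}(\mathcal F_k)\}$ is \emph{non-increasing}, so knowing $\operatorname{d}(\mathcal F_k)\geq3$ for $k\leq\operatorname{l}(\mathcal C)$ does not give it at $k_0=\max\{\operatorname{l}(\widetilde{\mathcal C}),\operatorname{l}(\mathcal C)\}$, which may equal $\operatorname{l}(\mathcal C)+1$; one must check the non-vanishing of the minors directly up to that index, which is what the $d_k$ computation provides.

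There is also a genuine error in your argument for iii). The claim that ``$\mathcal C$ MDS forces all the relevant distances to be $3$'', i.e.\ that $\mu=\operatorname{d}\left(\begin{smallmatrix}G_2\\ G_1\\ G_0\end{smallmatrix}\right)=3$, is false: being MDS only forces each row $G_j$ to have all entries non-zero, and the paper's example immediately after the theorem is an MDS code of type $[7,1,2]$ with $\operatorname{d}\left(\begin{smallmatrix}G_2\\ G_1\\ G_0\end{smallmatrix}\right)=5$. Substituting $\mu\mapsto3$ replaces $\operatorname{l}(\mathcal C)=\lfloor 3n/\mu\rfloor-3$ by the larger value $n-3$, so your comparison only bounds $\operatorname{l}(\widetilde{\mathcal C})$ by $n-2$, not by $\operatorname{l}(\mathcal C)+1$. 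The correct argument (the paper's) keeps the true $\mu$, uses $\tilde\mu\geq\mu$ (your correct observation, i.e.\ part ii)), and compares the two floors, $\lfloor 3(n+1)/\mu\rfloor-\lfloor 3n/\mu\rfloor\leq1$ for $\mu\geq3$, treating $\mu=3$ and $\mu\geq4$ separately. Once iii) is fixed, the logical ordering you propose (ii), then iii), then i)) does coincide with the paper's.
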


\begin{proof}
First, note that ii) follows from the inequalities:
    $$
        \operatorname{d}{\small
            \begin{pmatrix}
            G_{2} & \vert & F_2 \\
            G_{1} & \vert & F_{1} \\
            G_{0} & \vert & F_{0}
            \end{pmatrix}}
	\,\geq \,
        \operatorname{d}{\small
            \begin{pmatrix}
            G_{2} \\
            G_{1}  \\
            G_{0}
            \end{pmatrix}}
    \,\geq \, 3
    $$

Let us now prove iii).  Recall that  ${\mathcal C}$ is of
type $[n,1,2]$ and that $\operatorname{l}({\mathcal C})$ is given by
formula (\ref{eq:lambdaG=integerpart}). Similarly, $\widetilde{\mathcal C}$ is of type $[n+1,1,2]$ and in
order to compute $\operatorname{l}(\widetilde{\mathcal C})$ by
formula (\ref{eq:lambdaG=integerpart}) we must juxtapose a new column given by the coefficients of
${\mathcal F}(z)$  to the
matrix ${\tiny
            \begin{pmatrix}
            G_{2} \\
            G_{1}  \\
            G_{0}
            \end{pmatrix}}$. Part ii) yields:
     $$
    \begin{gathered}
    \operatorname{l}(\widetilde{\mathcal C})
    \,\leq\,
       \left\lfloor 3(n+1)
        \operatorname{d}{\tiny
            \begin{pmatrix}
            G_{2} \\
            G_{1}  \\
            G_{0}
            \end{pmatrix}}^{-1}\right\rfloor
        \; -3
        \\
	    \operatorname{l}({\mathcal C})
    \,=\,
       \left\lfloor 3n \operatorname{d}{\tiny
            \begin{pmatrix}
            G_{2} \\
            G_{1}  \\
            G_{0}
            \end{pmatrix}}^{-1}\right\rfloor\; -3
            \end{gathered}
    $$
We have to show that $\operatorname{l}(\widetilde{\mathcal C})- \operatorname{l}({\mathcal C})\leq 1$. The case $\operatorname{d}{\tiny \begin{pmatrix} G_{2} \\  G_{1}  \\  G_{0} \end{pmatrix}}= 3$ is easy. For $\operatorname{d}{\tiny \begin{pmatrix} G_{2} \\  G_{1}  \\  G_{0} \end{pmatrix}}\geq 4$, it is enough to note that:
   $$
    \begin{gathered}
    \operatorname{l}(\widetilde{\mathcal C})
    \,\leq\,
       3(n+1)
        \operatorname{d}{\tiny
            \begin{pmatrix}
            G_{2} \\
            G_{1}  \\
            G_{0}
            \end{pmatrix}}^{-1}
        \; -3
        \\
	 \operatorname{l}({\mathcal C})
	 \geq  3 n \operatorname{d}{\tiny
            \begin{pmatrix}
            G_{2} \\
            G_{1}  \\
            G_{0}
            \end{pmatrix}}^{-1}
            \, -4
             \end{gathered}
   $$
and that $\lfloor 3\operatorname{d}{\tiny
            \begin{pmatrix}
            G_{2} \\
            G_{1}  \\
            G_{0}
            \end{pmatrix}}^{-1}\rfloor=0$.

Finally, we prove the first item. Recalling the Singleton bounds for
block codes and for convolutional codes and
Lemma~\ref{lem:dtildeGgeqdG+dF}, it will suffice to show that
${\mathcal F}_k$ is an MDS linear code for $
k=\operatorname{l}({\mathcal C})$ (see Theorem~\ref{th:l(G)});
that is, none of its maximal minors vanish.


Let us denote by $d_k$ the determinant of the
$(k+1)\times(k+1)$-matrix obtained by removing the first and last
column of ${\mathcal F}_k$:
    {
    $$
    d_k\,=\,
    \begin{vmatrix}
        F_{1} & F_{2} & 0 & \dots  & 0
        \\
        F_{0} & F_{1} & \ddots  &  & \vdots
        \\
        0 &  \ddots &  & \ddots & 0
        \\
        \vdots &  \ddots  & \ddots  & &   F_{2}
        \\
        0 & \dots & 0 &  F_{0} & F_{1}
        \end{vmatrix}
        \, .
    $$}
Bearing in mind equation~(\ref{eq:F(z)=s(p)}), we note that $F_0=
s_{\lambda}^{(0)}(b)$ and that $F_2=a^2\lambda_2$, and therefore
the minors of ${\mathcal F}_k$ are given by the following
expressions:
    { $$
    F_0^{i-1} \cdot d_{j-i-1}   \cdot F_2^{k+2-j}
    \,=\,
    \big(s_{\lambda}^{(0)}(b)\big)^{i-1} \cdot d_{j-i-1}   \cdot  \big(a^2 \lambda_2\big)^{k+2-j}
    $$}
for $1\leq i <j\leq k+2$. It will suffice to show that $d_k\neq 0$
for all $k\leq \operatorname{l}({\mathcal C})+1$. Using the
following recursion relation:
    $$
    d_k\,=\, F_1 d_{k-1} - F_0 F_2 d_{k-2}
    $$
one has that $d_k=(-F_2)^{k+1}\sum_{i=0}^{k+1} r_1^i
r_2^{k+1-i} = (- a^2\lambda_2)^{k+1} h_{k+1}$, where $h_{k+1}$ is the $(k+1)$-th complete symmetric function on the roots of ${\mathcal F}(z)=0$.
\end{proof}

\begin{remark}
Let us comment on the $(k+1)$-th complete symmetric
function. First, note that, $h_{k+1}$ is explicitly given by
$\sum_{i=0}^{k+1} r_1^i r_2^{k+1-i}$, where $r_1,r_2$ are  the
roots of ${\mathcal F}(z)=0$, and, since it is symmetric in
$r_1,r_2$, it belongs to the base field even if $r_1,r_2$ do not.
Further, $h_{k+1}$ can be expressed in terms of the coefficients
of ${\mathcal F}(z)$ by the recursion relation $h_{k+1}=-h_{k}
F_1/F_2 - h_{k-1} F_0/F_2$ with initial conditions $h_0=1$,
$h_1=F_1/F_2$.
\end{remark}

\begin{example}
Let us consider  Example \ref{ex:7points}. In particular, let
$\mathcal{G}(z)=G_0+zG_1+z^2 G_2$ be the generator matrix of the code
CGC of type $[n=7,1,\delta=2]$, where $D=\sum_i^7 p_i$,
$\,p_i=\alpha^{i-1} z +\alpha$ and
$s(t)=\lambda_0+\lambda_1t+\lambda_2t^2$ is the rational function
verifying (\ref{eq:example418}). We wish to extend this code to a
MDS CGC of type $[8,1,2]$ by adding a point
$p_8=az+b\notin\{p_i\}_{i=1}^7$, with $a\neq0$, to $D$.

Now, let $\mathcal{F}(z)$ be computed from
equation~(\ref{eq:F(z)=s(p)}) and let
$\widetilde{\mathcal{G}}=(\mathcal{G}\,|\,\mathcal{F})$. Note that
$\operatorname{d}{\tiny
\begin{pmatrix}
    G_{2} \\
    G_{1}  \\
    G_{0}
\end{pmatrix}}= 5$ while
$s_{\lambda}^{(0)}(b)\neq 0$ and $\lambda_2\neq 0$. Hence,  $\widetilde{\mathcal{G}}$ generates an MDS code if
$h_k\neq 0$ for all $k\leq l(\mathcal C)$. In fact
$l(\widetilde{\mathcal C})=l(\mathcal C)=1$. Thus, the condition that
we must check is equivalent to $d_1\neq0$, and
    {$$
    \begin{aligned}
    d_1\,& = \,
    \begin{vmatrix}
        F_1 & F_2 \\
        F_0 & F_1
    \end{vmatrix}\,=\,
    (\lambda_1 a)^2 + \lambda_2a^2(\lambda_2b^2+\lambda_1b+\lambda_0)
    \,=\\ & =\,
    \,a^2(\lambda_1^2 + \lambda_2^2b^2+\lambda_2\lambda_1b+\lambda_2\lambda_0)
    \end{aligned}
    $$}
Accordingly, the desired condition is   $\lambda_1^2 +
\lambda_2^2b^2+\lambda_2\lambda_1b+\lambda_2\lambda_0 \neq 0$.
\end{example}

\begin{remark}
As a final remark, let us point  out
that Corollary~\ref{cor:MDSopen} also implies that, for a given rational function $s(t)$, the  subset of eligible points:
    $$
    \{ p=az+b \in \Pmat^1 \text{ such that $\widetilde{\mathcal C}$ is MDS}\}
    $$
is Zariski open in $\Pmat^1$; i.e. \emph{bad} points fulfill certain algebraic equations and thus we have plenty of choices for such $p$. For instance, let us consider Example~\ref{ex:7points} with $s(t)=1+t+t^2$; i.e.
$\lambda_0=\lambda_1=\lambda_2=1$. Thus, the  point $p_8=az+b$
satisfies the condition that $\widetilde{\mathcal C}$ is indeed MDS if and only if $a\neq 0$
and $1+(b^2+b+1)\neq 0$; or, equivalently, $a\neq 0$ and $b\neq
0,1$.
\end{remark}

\section{Conclusions and further research}\label{sec:conclusion}

This work addresses several significant results with
promising perspectives for future constructions of
MDS convolutional codes. The results are related to the study
of three properties of the free distance and  the systematic construction of
convolutional codes.

First, our study of the sequence of row distances for a class of codes has allowed us to bound the stage in which this sequence has achieved the free distance of the convolutional code. Thus, we have derived an explicit
method for the calculation of the free distance.

Second, for the case of convolutional codes depending on parameters, the free distance has been studied as a function
on the parameter space and it has been shown that it is lower semicontinuous. In particular, we conclude that the property of being MDS is an open condition; that is, the subset of the parameter space corresponding to non-MDS codes is defined by a finite number of algebraic equations.

The last property is that the free distance is preserved when the alphabet grows; i.e. it is invariant under extensions of the base field.

Finally, an illustration of our results is offered for the case of $1$-dimensional MDS convolutional Goppa Codes. We compute the algebraic equations of the non-MDS locus in a number of examples.
In particular, by exhibiting a number of
examples on small fields, the existence of MDS convolutional codes is proved without requiring us to enlarge the base field.
We finish with a procedure, given an MDS CGC, for producing a new MDS CGC of greater length.

As a continuation of our approach, we believe that some lines of research deserve further investigation. On
the one hand, it would be desirable to obtain more general results about the relation between the sequence of row distances and the free distance. On the other, it would be interesting to explore different families of convolutional codes and to study the corresponding MDS locus.

\section*{Acknowledgments}

The authors wish to express their gratitude to  J.M. Mu\~{n}oz Porras and the anonymous referees
for their valuable comments and suggestions that have improved our work.

\end{document}